\date{}
\renewcommand{\uppercasenonmath}[1]{}
\numberwithin{equation}{section} \theoremstyle{plain}
\newtheorem{lem}{Lemma}[section]
\newtheorem{cor}[lem]{Corollary}
\newtheorem{prop}[lem]{Proposition}
\newtheorem{thm}[lem]{Theorem}
\newtheorem{cond}[lem]{Condition}
\newtheorem{definition}[lem]{Definition}
\newtheorem{Ex}[lem]{Example}
\newtheorem{Quest}[lem]{Question}
\newtheorem{Property}[lem]{Property}
\newtheorem{Properties}[lem]{Properties}
\newtheorem{Subprops}{}[lem]
\newtheorem{Para}[lem]{}
\newtheorem{fact}[lem]{Fact}
\newtheorem{remark}[lem]{Remark}
\newtheorem{rem}[lem]{Remark}
\def\blue{\color{blue}}
\newenvironment{ex}{\begin{Ex}\rm}{\end{Ex}}
\newtheorem*{ack*}{ACKNOWLEDGEMENTS}
\newcommand{\pf}{\noindent\begin {proof}}
\newcommand{\epf}{\end{proof}}
\newcommand{\X}{\mathcal{X}}
\newcommand{\C}{\mathcal{C}}
\newcommand{\A}{\mathcal{F}}
\newcommand{\B}{\mathcal{C}}
\newcommand{\h}{{\rm H}}
\begin{document}
\begin{center}
{\Large  \bf  Model structure arising from one hereditary complete cotorsion pair on extriangulated categories}

\vspace{0.5cm}  Jiangsheng Hu$^{a}$, Dongdong Zhang$^{b}$\footnote{Corresponding authors. Jiangsheng Hu is supported by the National Natural Science Foundation of China (Grant Nos. 12571035 and 12171206) and Jiangsu 333 Project. Dongdong Zhang is supported by National Natural Science Foundation of China (Grant No. 12571042). Pu Zhang is supported by National Natural Science Foundation of China (Grant No. 12131015) and by Natural Science Foundation of Shanghai (Grant No. 23ZR1435100). Panyue Zhou is supported by the National Natural Science Foundation of China (Grant No. 12371034) and by the Scientific Research Fund of Hunan Provincial Education Department (Grant No. 24A0221).}, Pu Zhang$^{c{\blue *}}$ and Panyue Zhou$^{d}$ \\
\medskip

\hspace{-4mm}$^{a}$School of Mathematics, Hangzhou Normal University, Hangzhou 311121, P. R. China\\[1mm]
 $^b$School of Mathematical Sciences, Zhejiang Normal University,
\small Jinhua 321004, P. R. China\\[1mm]
$^c$School of Mathematical Sciences, Shanghai Jiao Tong University,
\small  Shanghai 200240, P. R. China\\[1mm]
$^d$School of Mathematics and Statistics, Changsha University of Science and Technology, \\[1mm]
Changsha 410114, P. R. China\\[1mm]
E-mails: hujs@hznu.edu.cn, zdd@zjnu.cn, pzhang$\symbol{64}$sjtu.edu.cn and panyuezhou@163.com \\
\end{center}

\bigskip
\centerline { \bf  Abstract}
\medskip
\leftskip10truemm \rightskip10truemm
\noindent Hovey's correspondence between model structures and cotorsion pairs in the setting of abelian categories, has been generalized by Nakaoka-Palu, using two cotorsion pairs, to the setting of
weakly idempotent complete extriangulated categories, and the aim of the paper is to give
an analogous correspondence using one (hereditary) cotorsion pair generalizing in this setting
work of Beligiannis-Reiten and Cui, Lu and Zhang. Furthermore, we provide methods to construct model structures from silting objects in weakly idempotent complete extriangulated categories and co-$t$-structures on triangulated categories.
\\[2mm]
{\bf Keywords:} extriangulated category; cotorsion pair; model structure; silting object; co-$t$-structure\\
{\bf 2020 Mathematics Subject Classification:} 18N40; 16D90; 16E30; 16E65

\leftskip0truemm \rightskip0truemm
\section {\bf Introduction}
The Hovey correspondence \cite{HCc, Gillespie,Yang} is a tool for constructing model structures on abelian categories, exact categories or triangulated categories.
The notion of an extriangulated category was introduced in \cite{NP} as a common generalization of exact categories and triangulated categories, and Hovey's correspondence has been extended by Nakaoka and Palu in this context, where the corresponding homotopy category is proved to be a triangulated category (\cite[Theorem 6.20]{NP}; see also \cite[Theorem 6.34]{G}).

\vskip5pt

The Hovey correspondence needs two complete cotorsion pairs. Beligiannis and  Reiten \cite[VIII, 4.2, 4.13]{Beli-Reiten}
establish a one-to-one correspondence between weakly projective model structures (or called the $\omega$-model structures)
on abelian category $\mathcal{A}$ and hereditary complete cotorsion pairs
with the core $\omega$ contravariantly finite in $\mathcal{A}$.
This involves only one hereditary complete cotorsion pair.
Recently, it has been extended by Cui, Lu and Zhang \cite{CLZ}
to weakly idempotent complete exact categories $\mathcal{E}$. These weakly projective model structures are not necessarily exact.   Recall  that a model structure
on an exact category is {\it exact} (\cite[3.1]{Gillespie}), if cofibrations are precisely inflations with cofibrant cokernel, and fibrations are precisely deflations with fibrant kernel. The
two approaches get the same result if and only if $\mathcal{E}$ has enough projective objects and the model
structure is {\it projective} in the sense of Gillespie (\cite[4.5]{Gillespie}), i.e., it is exact and each object is fibrant.

\vskip5pt

The aim of this paper is to extend the Beligiannis-Reiten correspondence to weakly idempotent
complete extriangulated categories. This of course heavily involves the applications of properties of extriangulated categories.

\vskip5pt

To state the main results, we recall some definitions.
An additive category $\mathcal{A}$ is  {\it weakly idempotent complete} if every retraction has
a kernel, or equivalently, if every section has a cokernel \cite[Definition 7.2]{Buh}.
Assume that $\mathcal{B} = (\mathcal{B}, \mathbb{E}, \mathfrak{s})$ is a weakly idempotent complete extriangulated category (see \cite[Definition 2.10]{NP}).
A sequence $\xymatrix@C=0.6cm{A \ar[r]^{x} & B \ar[r]^{y} & C }$ in $\mathcal{B}$ is a \emph{conflation} if it realizes some $\mathbb{E}$-extension $\delta\in\mathbb{E}(C, A)$.
The conflation is usually written as $\xymatrix{A\ar[r]^x&B\ar[r]^{y}&C\ar@{-->}[r]^{\delta}&.}$
In this case, $x$ is called an {\it inflation}, $y$ a {\it deflation}, $C$ the \emph{cone} of $x$ which is denoted by ${\rm Cone}(x)$, and $A$ the \emph{cocone} of $y$ which is denoted by ${\rm CoCone}(y)$.

\vskip5pt Let $\mathcal{X}$ and $\mathcal{Y}$ be full subcategories of $\mathcal{B}$
which are closed under direct summands and isomorphisms. Put $\omega:=\mathcal{X}\cap \mathcal{Y}$. Define three classes of morphisms in $\mathcal{B}$ as follows.

\vskip5pt

\begin{center}${\rm CoFib}_{\omega}=\{f: A\rightarrow B\mid f $ is an inflation with ${\rm Cone}(f)\in\mathcal{X}\}$.
\vspace{2mm}

${\rm Fib}_{\omega}=\{f: A\rightarrow B\mid \mathcal{B}(W, f): \mathcal{B}(W, A)\rightarrow \mathcal{B}(W, B)$ is surjective for any $W\in\omega\}$. \qquad (1.1)
\vspace{2mm}

${\rm Weq}_{\omega}=\{f: A\rightarrow B\mid $ there is a deflation $(f, t): A\oplus W\rightarrow B$

 such that $W\in\omega$ and ${\rm CoCone}(f, t)\in\mathcal{Y}\}$. \end{center}

%
%
%
%
%
%
\vspace{1mm}


\begin{thm}\label{thmA} Let $\mathcal{B}$ be a weakly idempotent complete extriangulated category, let $\mathcal{X}$ and $\mathcal{Y}$ be full additive subcategories of $\mathcal{B}$
which are closed under direct summands and isomorphisms, and $\omega=\mathcal{X}\cap \mathcal{Y}$. Then $({\rm CoFib}_\omega, {\rm Fib}_\omega, {\rm Weq}_\omega)$ as defined in {\rm(1.1)} is a model structure on $\mathcal{B}$ if and only if $(\mathcal{X, Y})$ is a hereditary complete cotorsion pair in $\mathcal{B}$, and $\omega$ is contravariantly finite in $\mathcal{B}$.

\vskip5pt

If this is the case, then
\begin{center}
 ${\rm TCoFib}_{\omega}=\{f: A\rightarrow B\mid f$ is a splitting inflation with ${\rm Cone}(f)\in\omega\}$,
\vspace{2mm}

${\rm TFib}_{\omega}=\{f:A\rightarrow B\mid f$ is a deflation with ${\rm CoCone}(f)\in\mathcal{Y}\}$;
\end{center}
\noindent the class $\mathcal{C}_\omega$ of cofibrant objects is $\mathcal{X}$, the class $\mathcal{F}_\omega$ of fibrant objects is $\mathcal{B}$, the class $\mathcal{W}_\omega$ of trivial objects is $\mathcal{Y}$; and the homotopy category ${\rm Ho}(\mathcal{B})$ is equivalent to the additive quotient $\mathcal{X}/\omega$.
\end{thm}

For a full additive subcategory $\mathcal{U}$ of an additive category $\mathcal{A}$, recall that the quotient category $\mathcal{A}/\mathcal{U}$ has the same objects as $\mathcal{A}$, and
$$\textrm{Hom}_{\mathcal{A}/\mathcal{U}}(X,Y)=\textrm{Hom}_{\mathcal{A}}(X,Y)/\textrm{Hom}_{\mathcal{A}}(X,\mathcal{U},Y)$$
where $\textrm{Hom}_{\mathcal{A}}(X,\mathcal{U},Y)$ is the subgroup of $\textrm{Hom}_{\mathcal{A}}(X,Y)$ consisting of those morphisms which factor through an object in $\mathcal{U}$. Then $\mathcal{A}/\mathcal{U}$ is an additive category.

\vskip5pt

The model structure
given in Theorem \ref{thmA}  will be called the \emph{$\omega$-model structure}. The originality of Theorem \ref{thmA} is due to Beligiannis and Reiten \cite{Beli-Reiten} for abelian categories.
If $\mathcal{B}$ is a weakly idempotent complete exact category, Theorem \ref{thmA} recovers the $\omega$-model structure in \cite[Theorem 1.1]{CLZ}.

\vskip5pt

Thanks to the bijection between bounded hereditary cotorsion pairs
and silting subcategories in extriangulated categories (\cite[Theorem 5.7]{AT}),  Theorem \ref{thmA} yields that any silting object in a weakly idempotent complete extriangulated category induces a model structure (see Corollary \ref{cor:4.2}). Some examples  are given to illustrate this result (see Examples \ref{ex:4.1}-\ref{ex:4.3}).

\vskip5pt

A model structure on a weakly idempotent complete extriangulated category is \emph{weakly projective} if cofibrations are precisely inflations with cofibrant cone, trivial fibrations are precisely deflations with trivially fibrant cocone, and each object is fibrant.
See Proposition \ref{prop4.4} for equivalent characterizations of a weakly projective model structure. As in weakly idempotent complete exact categories (\cite[Theorem 1.3]{CLZ}),
the $\omega$-model structures on a weakly idempotent complete extriangulated category are precisely the weakly projective model structures.

\begin{thm}\label{thm4.6} {\rm(The Beligiannis-Reiten correspondence)} Let $\mathcal{B}$ be a weakly idempotent complete extriangulated category, $\Omega$ the class of hereditary complete cotorsion pairs $(\mathcal{X}, \mathcal{Y})$ with $\omega=\mathcal{X}\cap\mathcal{Y}$  contravariantly finite, and $\Gamma$ the class of weakly projective model structures on $\mathcal{B}$. Then the map
$$\Phi: (\mathcal{X}, \mathcal{Y})\mapsto ({\rm CoFib}_\omega, {\rm Fib}_\omega, {\rm Weq}_\omega)$$
as given in $(1.1)$, is a bijection between $\Omega$ and $\Gamma$, with the inverse given by
$$\Psi:({\rm CoFib}, {\rm Fib}, {\rm Weq})\mapsto (\mathcal{C}, {\rm T}\mathcal{F})$$
where $\mathcal{C}$ and ${\rm T}\mathcal{F}$ are respectively the class of cofibrant objects and the class of trivially fibrant objects, of the model structure $({\rm CoFib}, {\rm Fib}, {\rm Weq})$.
\end{thm}

Theorem \ref{thm4.6} is discovered by Beligiannis and Reiten for abelian categories in \cite[Theorem 4.6]{Beli-Reiten}. It is generalized to weakly idempotent complete exact category in \cite[Theorem 1.3]{CLZ}. Furthermore, specializing the weakly idempotent complete extriangulated category $\mathcal{B}$ to a triangulated category, Theorem \ref{thm4.6} yields a one-to-one correspondence between weakly projective model structures on $\mathcal{B}$ and co-$t$-structures with the coheart contravariantly finite in $\mathcal{B}$ (see Corollary \ref{corB}).

\vskip5pt

The paper is organized  as follows. Section 2 recalls necessary preliminaries on extriangulated categories, cotorsion pairs, model structures and their homotopy categories.
Section 3 contributes to showing Theorem \ref{thmA}. In Section 4, weakly projective model structures are characterized, and Theorem \ref{thm4.6} is proved. Some corollaries of Theorem \ref{thmA} are given in the final section.

\section{\bf Preliminaries}
Throughout $\mathcal{B}$ is an additive category. All subcategories are full and closed under isomorphisms and direct summands.

\subsection{Extriangulated categories} Suppose throughout that $\mathbb{E}: \mathcal{B}^{op}\times \mathcal{B}\rightarrow {\rm Ab}$ is an additive bifunctor,  where ${\rm Ab}$ is the category of abelian groups. For any objects $A, C\in\mathcal{B}$, an element $\delta\in \mathbb{E}(C,A)$ is called {\it an $\mathbb{E}$-extension}. The zero element $0\in\mathbb{E}(C, A)$ is called {\it the split $\mathbb{E}$-extension}.
For any $a\in\mathcal{B}(A, A')$ and $c\in\mathcal{B}(C', C)$, since $\mathbb E$ is a bifunctor, one has $\mathbb{E}$-extensions
$\mathbb{E}(C, a)(\delta)\in\mathbb{E}(C, A')$ and $\mathbb{E}(c, A)(\delta)\in\mathbb{E}(C', A)$, which is simply denoted by $a_*\delta$ and $c^*\delta$, respectively,
and one has the equality in $\mathbb{E}(C', A'):$ $$\mathbb{E}(c, a)(\delta)=c^*a_*\delta=a_*c^*\delta.$$

\vskip5pt

Let $\delta\in\mathbb{E}(C, A)$ and $\delta'\in\mathbb{E}(C', A')$ be $\mathbb{E}$-extensions. {\it A morphism} $(a, c): \delta\rightarrow \delta'$ of $\mathbb{E}$-extensions is a pair of morphisms $a\in\mathcal{B}(A, A')$ and $c\in\mathcal{B}(C, C')$ in $\mathcal{B}$ such that $a_*\delta=c^*\delta'.$

\vskip5pt

Let $\xymatrix{C\ar[r]^{\iota_C\ \ \ }&C\oplus C'&C'\ar[l]_{\qquad\iota_{C'}}}$
 and
 $\xymatrix{A&A\oplus A'\ar[l]_{ p_A\;\;}\ar[r]^{\quad p_{A'} }&A'}$
be coproduct and product in $\mathcal{B}$, respectively. By the additivity of $\mathbb{E}$ one has a natural isomorphism
\begin{center} $\mathbb{E}(C\oplus C', A\oplus A')\simeq\mathbb{E}(C, A)\oplus\mathbb{E}(C, A')
\oplus\mathbb{E}(C', A)\oplus\mathbb{E}(C', A')$.\end{center}
Let $\delta\oplus \delta'\in\mathbb{E}(C\oplus C', A\oplus A')$ be the element corresponding to $(\delta, 0, 0, \delta')$ through this isomorphism. This is the unique element which satisfies
\begin{center}$\mathbb{E}(\iota_C, p_A)(\delta\oplus \delta')=\delta,\ \mathbb{E}(\iota_C, p_{A'})(\delta\oplus \delta')=0, \ \mathbb{E}(\iota_{C'}, p_A)(\delta\oplus \delta')=0,\ \mathbb{E}(\iota_{C'}, p_{A'})(\delta\oplus \delta')=\delta'$.\end{center}

 Recall that two sequences
 $\xymatrix{A\ar[r]^x&B\ar[r]^y&C}$ and $\xymatrix{A\ar[r]^{x'}&B'\ar[r]^{y'}&C}$ of morphisms in $\mathcal{B}$
 are {\it equivalent}, if there is an isomorphism $b\in\mathcal{B}(B, B')$ such that $x' = b\circ x, \ y'\circ b = y.$
The corresponding equivalence class is denoted by
$\xymatrix{[A\ar[r]^x&B\ar[r]^y&C]}$.
For any $A, C\in\mathcal{B}$, Put
 $$0=[A\xrightarrow{~\tiny\begin{bmatrix}1\\0\end{bmatrix}~}A\oplus C\xrightarrow{\tiny\begin{bmatrix}0&1\end{bmatrix}}C]$$
and
\begin{center} $\xymatrix{[A\ar[r]^x&B\ar[r]^y&C]}\oplus$$\xymatrix{[A'\ar[r]^{x'}&B'\ar[r]^{y'}&C']}=$$\xymatrix{[A\oplus A'\ar[r]^{x\oplus x'}&B\oplus B'\ar[r]^{y\oplus y'}&C\oplus C'].}$\end{center}

 \begin{definition}\label{def1}{\rm (\cite[Definitions 2.9, 2.10]{NP}) \ A {\it realization} $\mathfrak{s}$ of $\mathbb{E}$ is a correspondence, which  assigns any $\mathbb{E}$-extension $\delta\in \mathbb{E}(C, A)$  for any object $A, C\in \mathbb {E}$ to
an equivalence class $$\mathfrak{s}(\delta)=\xymatrix@C=0.8cm{[A\ar[r]^x
 &B\ar[r]^y&C]}$$
satisfying the following condition $(\star)$. In this case, one says that the sequence $\xymatrix{A\ar[r]^x&B\ar[r]^y&C}$ realizes $\delta$.

\vskip5pt

$(\star)$ \ \ Let $\delta\in\mathbb{E}(C, A)$ and $\delta'\in\mathbb{E}(C', A')$ be any pair of $\mathbb{E}$-extensions, with
 \begin{center} $\mathfrak{s}(\delta)=$$\xymatrix{[A\ar[r]^x &B\ar[r]^y&C]}$ and $\mathfrak{s}(\delta')=$$\xymatrix{[A'\ar[r]^{x'} &B'\ar[r]^{y'}&C'].}$\end{center} Then for any morphism $(a, c): \delta\rightarrow \delta'$, there exists $b\in\mathcal{B}(B, B')$ such that the diagram commutes:
 $$\xymatrix@R = 0.5cm{A\ar[r]^{x}\ar[d]_{a}&B\ar[r]^{y}\ar[d]_{b}&C\ar[d]_{c}\\
 A'\ar[r]^{x'}&B'\ar[r]^{y'}&C'.}$$
In this situation, one says that the triple $(a, b, c)$ realizes $(a, c)$.

\vskip5pt

A realization $\mathfrak{s}$ of $\mathbb{E}$ is {\it additive}, if it satisfies the following conditions:

{\rm (i)} \ For any $A, C\in\mathcal{B}$, the split $\mathbb{E}$-extension $0\in\mathbb{E}(C, A)$ satisfies $\mathfrak{s}(0)=0.$

{\rm (ii)} \ For any pair of $\mathbb{E}$-extensions $\delta\in\mathbb{E}(C, A)$ and $\delta'\in\mathbb{E}(C', A')$, one has \begin{center}$\mathfrak{s}(\delta\oplus\delta')=\mathfrak{s}(\delta)\oplus\mathfrak{s}(\delta')$.\end{center}}
 \end{definition}

 \begin{definition} {\rm (\cite[Definition 2.12]{NP}) A triplet $(\mathcal{B}, \mathbb{E}, \mathfrak{s})$ is an {\it extriangulated category} if it satisfies the following conditions.

{\rm(ET1)} \ $\mathbb{E}: \mathcal{B}^{\rm op}\times \mathcal{B}\rightarrow \rm{Ab}$ is an additive bifunctor.

{\rm (ET2)} \ $\mathfrak{s}$ is an additive realization of $\mathbb{E}$.

{\rm (ET3)} \ Let $\delta\in\mathbb{E}(C, A)$ and $\delta'\in\mathbb{E}(C', A')$ be $\mathbb{E}$-extensions, realized as
 \begin{center} $\mathfrak{s}(\delta)=$$\xymatrix{[A\ar[r]^x &B\ar[r]^y&C]}$ and \ $\mathfrak{s}(\delta')=$$\xymatrix{[A'\ar[r]^{x'} &B'\ar[r]^{y'}&C'].}$\end{center} For any commutative square
 $$\xymatrix@R=0.5cm{A\ar[r]^{x}\ar[d]_{a}&B\ar[r]^{y}\ar[d]_{b}&C\\
 A'\ar[r]^{x'}&B'\ar[r]^{y'}&C'}$$
 in $\mathcal{B}$, there exists a  morphism $(a, c): \delta\rightarrow \delta'$ such that $c\circ y=y'\circ b$. (Then  one has $a_*\delta = c^*\delta'.$)
\vspace{1mm}

{\rm (ET3)}$^{\rm op}$ \ The dual of {\rm (ET3)}.
\vspace{1mm}

{\rm (ET4)} \ Let $\delta\in\mathbb{E}(D,A)$ and $\delta'\in\mathbb{E}(F, B)$ be $\mathbb{E}$-extensions realized by
 \begin{center} $\xymatrix{A\ar[r]^f&B\ar[r]^{f'}&D}$ and $\xymatrix{B\ar[r]^g&C\ar[r]^{g'}&F}$\end{center}
 respectively. Then there exist an object $E\in\mathcal{C}$, a commutative diagram
 $$\xymatrix@R=0.7cm{A\ar[r]^f\ar@{=}[d]&B\ar[r]^{f'}\ar[d]_g&D\ar[d]^d\\
A\ar[r]^h&C\ar[r]^{h'}\ar[d]_{g'}&E\ar[d]^e\\
&F\ar@{=}[r]&F
}$$
in $\mathcal{B}$, and an $\mathbb{E}$-extension $\delta^{''}\in\mathbb{E}(E, A)$ realized by $\xymatrix{A\ar[r]^h&C\ar[r]^{h'}&E,}$
satisfying the following compatibilities:

{\rm (i)} \ \ $\xymatrix{D\ar[r]^d&E\ar[r]^{e}&F}$ realizes $f'_*\delta'$,

{\rm (ii)} \ \ $d^*\delta^{''}=\delta$,

{\rm (iii)} \ \ $f_*\delta^{''}=e^*\delta'$.
\vspace{1mm}

{\rm (ET4)}$^{\rm op}$ \ The dual of {\rm (ET4)}.
} \end{definition}

\begin{rem} $(1)$ \ Exact categories and triangulated categories are extriangulated {\rm (\cite[Example 2.13]{NP})}. More precisely, let $\mathcal{B}$ be a triangulated category with shift functor $[1]$. Put $\mathbb{E}(-,-):=\mathcal{B}(-, -[1])$.
For any $\delta\in\mathbb{E}(C, A)=\mathcal{B}(C, A[1])$, then $\delta$ is embedded into a unique distinguished triangle
$\xymatrix{A\ar[r]^x&B\ar[r]^y&C\ar[r]^\delta&A[1],}$ up to an isomorphism of triangles. Define $\mathfrak{s}(\delta)=\xymatrix{[A\ar[r]^x&B\ar[r]^y&C]}.$
Then $(\mathcal{B}, \mathbb{E}, \mathfrak{s})$ becomes an extriangulated category. Let $\mathcal{B}$ be an exact category. Put $\mathbb{E}(-,-):={\rm Ext}^1_\mathcal B(-, -)$ and $\mathfrak{s}(\delta)=\delta$ for any $\delta\in\mathbb{E}(C, A)$. Then $(\mathcal{B}, \mathbb{E}, \mathfrak{s})$ becomes an extriangulated category.

$(2)$ There exist extriangulated categories which are neither exact categories nor triangulated categories $($see \cite[Proposition 3.30]{NP}, \cite[Remark 4.13]{ZZ} and \cite[Remark 3.3]{HZZ}$)$.
\end{rem}
We will use the following terminology in the sequel.

\begin{definition}{\rm (\cite[Definitions 2.15, 2.19 and 3.9]{NP}) \ Let $(\mathcal{B}, \mathbb{E}, \mathfrak{s})$ be an extriangulated category.

{\rm (1)} \ A sequence $\xymatrix@C=1cm{A\ar[r]^x&B\ar[r]^{y}&C}$ is a {\it conflation} if it realizes an $\mathbb{E}$-extension $\delta\in\mathbb{E}(C, A)$.
In this case, $x$ is called an {\it inflation},  and $y$ a {\it deflation};  and the pair $(\xymatrix@C=1cm{A\ar[r]^x&B\ar[r]^{y}&C}, \delta)$ is called an $\mathbb{E}$-triangle, which is
denoted by
\begin{center} $\xymatrix{A\ar[r]^x&B\ar[r]^{y}&C\ar@{-->}[r]^{\delta}&}$\end{center}
$($the $\delta$ is also omitted if it is not used in the argument$)$.

{\rm (2)} \ Let $\xymatrix{A\ar[r]^x&B\ar[r]^{y}&C\ar@{-->}[r]^{\delta}&}$ and $\xymatrix{A'\ar[r]^{x'}&B'\ar[r]^{y'}&C'\ar@{-->}[r]^{\delta'}&}$
be $\mathbb{E}$-triangles. If a triple $(a, b, c)$ realizes $(a, c): \delta\rightarrow \delta'$, then one writes it as
 $$\xymatrix@R=0.5cm{A\ar[r]^{x}\ar[d]_{a}&B\ar[r]^{y}\ar[d]_{b}&C\ar[d]_{c}\ar@{-->}[r]^{\delta}&\\
 A'\ar[r]^{x'}&B'\ar[r]^{y'}&C'\ar@{-->}[r]^{\delta'}&}$$
 and call $(a, b, c)$ a {\it morphism} of $\mathbb{E}$-triangles.

 {\rm (3)} \ For a conflation $\xymatrix@C=0.6cm{A\ar[r]^-f&B\ar[r]^-g&C,}$ one writes ${\rm Cone}(f) = C$, and ${\rm CoCone}(g) = A$.}
\end{definition}

By (ET4) and (ET4)$^{\rm op}$ one has
\begin{lem}\label{lem1}  Let $(\mathcal{B}, \mathbb{E},\mathfrak{s})$ be an extriangulated category. Then the following statements hold.

\emph{(1)} If $f$ and $g$ are inflations, then $gf$ is an inflation, and there is a conflation
 $$\xymatrix@C=0.6cm{{\rm Cone}(f)\ar[r]&{\rm Cone}(gf)\ar[r]&{\rm Cone}(g)}.$$

\emph{(2)} If $f$ and $g$ are deflations, then $gf$ is a deflation, and there is a conflation
$$\xymatrix@C=0.6cm{{\rm CoCone}(f)\ar[r]&{\rm CoCone}(gf)\ar[r]&{\rm CoCone}(g)}.$$
\end{lem}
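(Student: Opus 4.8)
The plan is to prove Lemma \ref{lem1} by a direct application of the octahedron-type axioms (ET4) and (ET4)$^{\rm op}$, which is exactly what the phrase ``we can directly obtain'' signals. I will only write out part (1), since part (2) is the formal dual obtained by reversing arrows and replacing $\mathbb{E}$-triangles by their opposites; the reader can carry this out verbatim in $\mathcal{C}^{\rm op}$.

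For part (1), suppose $f\in\mathcal{C}(A,B)$ and $g\in\mathcal{C}(B,C)$ are inflations. Choose conflations $\xymatrix@C=0.6cm{A\ar[r]^f&B\ar[r]&D}$ and $\xymatrix@C=0.6cm{B\ar[r]^g&C\ar[r]&F}$ realizing $\mathbb{E}$-extensions $\delta\in\mathbb{E}(D,A)$ and $\delta'\in\mathbb{E}(F,B)$ respectively, so that $D={\rm Cone}(f)$ and $F={\rm Cone}(g)$. Now apply (ET4) to this pair: it produces an object $E\in\mathcal{C}$, an $\mathbb{E}$-extension $\delta''\in\mathbb{E}(E,A)$ realized by a conflation $\xymatrix@C=0.6cm{A\ar[r]^h&C\ar[r]^{h'}&E}$, and a commutative diagram whose middle row and column assemble to show that $h$ can be taken to be $gf$ (the left column of the (ET4) diagram reads $A\xrightarrow{f}B\xrightarrow{g}C$ with the top-left horizontal arrow being $f$ and the vertical arrow being $g$, so the composite realized as an inflation is $h=gf$). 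In particular $gf$ is an inflation with ${\rm Cone}(gf)=E$. The compatibility (i) of (ET4) then gives a conflation $\xymatrix@C=0.6cm{D\ar[r]^d&E\ar[r]^e&F}$ realizing $f'_*\delta'$, which is precisely the asserted conflation $\xymatrix@C=0.6cm{{\rm Cone}(f)\ar[r]&{\rm Cone}(gf)\ar[r]&{\rm Cone}(g)}$.

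I do not anticipate a genuine obstacle here; the only point requiring a little care is matching the labelling in the statement of (ET4) to the situation at hand, i.e.\ checking that the ``$A\xrightarrow{f}B\xrightarrow{f'}D$'' and ``$B\xrightarrow{g}C\xrightarrow{g'}F$'' of the axiom correspond to our two chosen conflations with the composable inflation being $gf$, and then reading off that the object called $E$ in the axiom is exactly ${\rm Cone}(gf)$. Once that identification is made, the conflation $\xymatrix@C=0.6cm{D\ar[r]^d&E\ar[r]^e&F}$ is handed to us directly by the axiom, and there is nothing further to compute. Part (2) is then immediate by dualizing: deflations in $\mathcal{C}$ are inflations in $\mathcal{C}^{\rm op}$, ${\rm CoCone}$ in $\mathcal{C}$ is ${\rm Cone}$ in $\mathcal{C}^{\rm op}$, and (ET4)$^{\rm op}$ is (ET4) for $\mathcal{C}^{\rm op}$, so applying the already-proved part (1) to the opposite extriangulated category yields the claim.
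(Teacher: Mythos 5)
Your proposal is correct and matches the paper's intent exactly: the paper gives no written proof, stating only that the lemma follows directly from (ET4) and (ET4)$^{\rm op}$, and your unwinding of (ET4) (reading $h=gf$ from the left commutative square and the conflation $D\to E\to F$ from compatibility (i)) together with the dualization for part (2) is precisely that argument.
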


\begin{definition}{\rm(\cite[Proposition 3.24,  Definition 3.25]{NP})} {\rm  Let $(\mathcal{B}, \mathbb{E}, \mathfrak{s})$ be an extriangulated category.
An object $P\in\mathcal{B}$ is {\it a projective object} if $\mathbb{E}(P, B)=0$ for any object $B\in\mathcal{B}$. Denote the
subcategory of projective objects by $\mathcal{P}\subseteq\mathcal{B}$. One says that $\mathcal{B}$ has {\it enough projectives} if
any object $B\in\mathcal{B}$ admits a deflation $P\longrightarrow B$ for some $P\in\mathcal{P}$.

Dually, one has the notions of {\it injective objects} and {\it having enough injectives.}}
  \end{definition}

 \begin{definition}{\rm (\cite[Section 5.1]{LN}) Assume that $\mathcal{U}, \mathcal{V}$ are subcategories of $\mathcal{B}$.

$(1)$ \ Let ${\rm CoCone}(\mathcal{U}, \mathcal{V})$ denote the subcategory of $\mathcal{B}$ consisting of $M\in \mathcal{B}$ which admits an $\mathbb{E}$-triangles $\xymatrix{M\ar[r]&U\ar[r]&V\ar@{-->}[r]&}$ in $\mathcal{B}$ with $U\in\mathcal{U}$ and $V\in\mathcal{V}$. Put $\Omega^0(\mathcal{B})=\mathcal{B}$, and define $\Omega^i(\mathcal{B})$ for $i>0$ inductively by
$\Omega^i(\mathcal{B})=\Omega(\Omega^{i-1}(\mathcal{B}))={\rm CoCone}(\mathcal{P}, \Omega^{i-1}(\mathcal{B})).$

$(2)$ \ Let ${\rm Cone}(\mathcal{U}, \mathcal{V})$ denote the subcategory of $\mathcal{B}$ consisting of $M\in \mathcal{B}$ which admits an $\mathbb{E}$-triangles $\xymatrix{U\ar[r]&V\ar[r]&M\ar@{-->}[r]&}$ in $\mathcal{B}$ with $U\in\mathcal{U}$ and $V\in\mathcal{V}$. Put $\Sigma^0(\mathcal{B})=\mathcal{B}$, and define $\Sigma^i(\mathcal{B})$ for $i>0$ inductively by
$\Sigma^i(\mathcal{B})=\Sigma(\Sigma^{i-1}(\mathcal{B}))={\rm Cone}(\Sigma^{i-1}(\mathcal{B}), \mathcal{I}).$}
\end{definition}

Let $(\mathcal{B}, \mathbb{E}, \mathfrak{s})$ be an extriangulated category with enough projectives and enough injectives. Then  $\mathbb{E}(X, \Sigma^i(Y))\cong \mathbb{E}(\Omega^i(X), Y)$, and denote it by $\mathbb{E}^i(X, Y)$ for $i\geqslant 0$ (\cite[Section 5.2]{LN}).

\vskip5pt

Assume that $(\mathcal{B}, \mathbb{E}, \mathfrak{s})$ is an extriangulated category. By Yoneda's Lemma, any $\mathbb{E}$-extension $\delta\in \mathbb{E}(C, A)$ induces  natural transformations
\begin{center} $\delta_\sharp: \mathcal{B}(-, C)\longrightarrow \mathbb{E}(-, A)$ and $\delta^\sharp: \mathcal{B}(A, -)\longrightarrow \mathbb{E}(C, -)$\end{center}
where for any $X\in\mathcal{B}$ one has

$$(\delta_\sharp)_X: \mathcal{B}(X, C)\longrightarrow \mathbb{E}(X, A), \ \  f\mapsto f^*\delta$$
and
$$\delta^\sharp_X: \mathcal{B}(A, X)\longrightarrow \mathbb{E}(C, X), \ \ g\mapsto g_*\delta.$$
\begin{lem} {\rm (\cite[Corollary 3.12]{NP})} \  Let $(\mathcal{B}, \mathbb{E}, \mathfrak{s})$ be an extriangulated category, and \begin{center}$\xymatrix{A\ar[r]^{x}&B\ar[r]^{y}&C\ar@{-->}[r]^{\delta}&}$\end{center}
an $\mathbb{E}$-triangle. Then the following  sequences are exact$:$
$$\xymatrix@C=1cm{\mathcal{B}(C, -)\ar[r]^{\mathcal{B}(y, -)}&\mathcal{B}(B, -)\ar[r]^{\mathcal{B}(x, -)}&\mathcal{B}(A, -)\ar[r]^{\delta^\sharp}&\mathbb{E}(C, -)\ar[r]^{\mathbb{E}(y, -)}&\mathbb{E}(B, -)\ar[r]^{\mathbb{E}(x, -)}&\mathbb{E}(A, -);}$$
$$\xymatrix@C=1cm{\mathcal{B}(-, A)\ar[r]^{\mathcal{B}(-, x)}&\mathcal{B}(-, B)\ar[r]^{\mathcal{B}(-, y)}&\mathcal{B}(-, C)\ar[r]^{\delta_\sharp}&\mathbb{E}(-, A)\ar[r]^{\mathbb{E}(-, x)}&\mathbb{E}(-, B)\ar[r]^{\mathbb{E}(-, y)}&\mathbb{E}(-, C).}$$
\end{lem}

\begin{lem}\label{td=1} \ Let $(\mathcal{B}, \mathbb{E},\mathfrak{s})$ be an extriangulated category. If $f:A\rightarrow B$ and $g:B\rightarrow A$ are morphisms in $\mathcal{B}$ such that $gf=1_{A}$, then $f$ is an inflation if and only if $g$ is a deflation.
\end{lem}
\begin{proof} We justify the ``only if" part. The ``if" part can be dually proved. Assume that $f$ is an inflation with $gf=1_{A}$.
Then there is a morphism of $\mathbb{E}$-triangles
$$\xymatrix@R=0.5cm{A\ar[r]^f\ar@{=}[d]&B\ar[r]^h\ar[d]^g&H\ar[d]&\\
  A\ar[r]^{1_{A}}&A\ar[r]^0&0.&
  }$$
This gives an $\mathbb{E}$-triangle $\xymatrix@C=1,2cm{B\ar[r]^{\tiny\begin{bmatrix}g\\h\end{bmatrix}\ \ \ }&A\oplus H\ar[r]^{\quad0}&0}$ (see \cite[Lemma 3.6(2)]{HZZ}). Thus ${\tiny\begin{bmatrix}g\\h\end{bmatrix} }:B\rightarrow{A\oplus H}$ is an isomorphism, and hence
 a deflation. Therefore  $g=(1,0){\tiny\begin{bmatrix}g\\h\end{bmatrix}}$ is a deflation.
\end{proof}

In the proof of Lemmas \ref{lem2} and \ref{lem3}, we will use the notion of weak pull-back and weak push-out: they are just obtained from the definition of pull-back and push-out, respectively,
by dropping the uniqueness in the following way.

\begin{definition} {\rm Let $\mathcal{M}$ be a category, with commutative square of morphisms$:$
$$\xymatrix@R=0.5cm{Y'\ar[r]^{v'}\ar[d]_{\alpha'}&Z'\ar[d]^{\alpha}\\Y\ar[r]^v&Z.}$$

(1) \ The commutative square is a {\it weak pull-back} of $v$ and $\alpha$, if for any morphism $f: W\rightarrow Y$ and $g: W\rightarrow Z'$ with $vf=\alpha g$, there is a morphism $h: W\rightarrow Y'$ such that $\alpha' h=f$ and $v' h=g$.

(2) \ The commutative square is a {\it weak push-out} of $v'$ and $\alpha'$, if for any morphism $f: Y\rightarrow W$ and $g: Z'\rightarrow W$ with $f\alpha'=g v'$, there is a morphism $h: Y'\rightarrow W$ such that $hv=f$ and $h\alpha=g$.}
\end{definition}

\begin{lem}\label{lem2}  Let $(\mathcal{B}, \mathbb{E},\mathfrak{s})$ be an extriangulated category, with morphisms $f: A\rightarrow B$ and $g: B\rightarrow C$.

\emph{(1)} \ If $f$ is an inflation and  $gf$ is a deflation, then $g$ is a deflation, and there is a conflation
 $$\xymatrix@C=0.6cm{{\rm CoCone}(gf)\ar[r]&{\rm CoCone}(g)\ar[r]&{\rm Cone}(f)}.$$

\emph{(2)} \ If $g$ is a deflation and $gf$ is an inflation, then $f$ is an inflation, and there is a conflation
$$\xymatrix@C=0.6cm{{\rm CoCone}(g)\ar[r]&{\rm Cone}(f)\ar[r]&{\rm Cone}(gf)}.$$
\end{lem}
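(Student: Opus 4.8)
The plan is to prove (1) carefully and then deduce (2) by applying (1) in the opposite extriangulated category $\mathcal{C}^{\rm op}$ (in which inflations and deflations, and ${\rm Cone}$ and ${\rm CoCone}$, are interchanged and a composite is replaced by the composite in the other order). Since $\mathcal{C}^{\rm op}$ is again extriangulated, this turns the hypotheses ``$g$ a deflation, $gf$ an inflation'' of (2) into the hypotheses of (1), and the conclusion ``${\rm CoCone}(gf)\to{\rm CoCone}(g)\to{\rm Cone}(f)$'' into ``${\rm CoCone}(g)\to{\rm Cone}(f)\to{\rm Cone}(gf)$''. So I will spell out only (1).

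For (1), fix an $\mathbb{E}$-triangle $A\xrightarrow{f}B\xrightarrow{f'}D\xrightarrow{\delta}$ with $D={\rm Cone}(f)$ and an $\mathbb{E}$-triangle $K\xrightarrow{k}A\xrightarrow{gf}C\xrightarrow{\eta}$ with $K={\rm CoCone}(gf)$. As $k$ and $f$ are composable inflations, I would feed them to (ET4): this produces an object $P={\rm Cone}(fk)$, an $\mathbb{E}$-triangle $K\xrightarrow{fk}B\xrightarrow{p}P$, an $\mathbb{E}$-triangle $C\xrightarrow{q}P\xrightarrow{r}D$ realizing $(gf)_{*}\delta$, and the compatibilities $rp=f'$ and $pf=q(gf)$, arranged in the usual $3\times 3$ diagram whose middle column is $A\xrightarrow{f}B\xrightarrow{f'}D$.

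The crucial point is that the $\mathbb{E}$-triangle $C\xrightarrow{q}P\xrightarrow{r}D$ splits. Indeed, from the long exact sequence attached to $A\xrightarrow{f}B\xrightarrow{f'}D\xrightarrow{\delta}$ one reads off that $\delta=(\delta_{\sharp})_{D}(1_{D})$, hence $\delta\in\ker\mathbb{E}(D,f)$, i.e.\ $f_{*}\delta=0$, and therefore $(gf)_{*}\delta=g_{*}(f_{*}\delta)=0$. Consequently $P$ may be taken to be $C\oplus D$ with $q$ and $r$ the coordinate maps; writing $p=\binom{p_{1}}{p_{2}}$ the compatibilities force $p_{2}=f'$ and $p_{1}f=gf$, so $(p_{1}-g)f=0$ and thus $p_{1}=g+sf'$ for some $s\colon D\to C$ (again by the long exact sequence of the $\mathbb{E}$-triangle of $f$). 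Then the automorphism $\bigl(\begin{smallmatrix}1&-s\\0&1\end{smallmatrix}\bigr)$ of $C\oplus D$ carries $p$ to $\binom{g}{f'}$, so $\binom{g}{f'}\colon B\to C\oplus D$ is a deflation with ${\rm CoCone}$ equal to $K$. Finally $g$ is the composite $B\xrightarrow{\binom{g}{f'}}C\oplus D\to C$ of this deflation with the split projection onto $C$ (whose ${\rm CoCone}$ is $D$), so Lemma \ref{lem1}(2) at once yields that $g$ is a deflation and that there is a conflation $K\to{\rm CoCone}(g)\to D$, i.e.\ ${\rm CoCone}(gf)\to{\rm CoCone}(g)\to{\rm Cone}(f)$, as desired.

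I expect the main obstacle to be exactly the splitting step: since inflations need not be monomorphisms here, one cannot simply chase the $3\times 3$ diagram as in the exact-category argument, and one is forced to pin down the $\mathbb{E}$-extension realized by the right-hand column coming out of (ET4) and to see that it vanishes, the vanishing of $f_{*}\delta$ being the one genuinely extriangulated input. Everything afterwards (the identification $P\cong C\oplus D$, the unipotent correction, and the two appeals to composition of deflations) is routine bookkeeping, and (2) then follows from (1) by the duality noted above, with (ET4)$^{\rm op}$ and Lemma \ref{lem1}(1) playing the dual roles.
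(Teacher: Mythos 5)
Your argument is correct, and it opens exactly as the paper's proof does: both apply (ET4) to the composable inflations ${\rm CoCone}(gf)\to A$ and $f\colon A\to B$, producing a third $\mathbb{E}$-triangle $C\to E\to {\rm Cone}(f)$ realizing $(gf)_*\delta$. Where you diverge is in how the two conclusions are then extracted. The paper invokes \cite[Lemma 3.13]{NP} to see that the relevant square of the (ET4) diagram is a weak pushout, which directly yields a retraction $t\colon E\to C$ with $td=1_C$ and $tk'=g$, so that $g$ is a composite of deflations; it then realizes the triangle of $g$ separately and applies \cite[Proposition 3.17]{NP} to manufacture the conflation ${\rm CoCone}(gf)\to{\rm CoCone}(g)\to{\rm Cone}(f)$. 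You instead compute the extension class of the third triangle outright: $f_*\delta=0$ by the long exact sequence of the triangle of $f$, hence $(gf)_*\delta=0$, so $E\cong C\oplus{\rm Cone}(f)$; after the unipotent correction you exhibit $g$ as the split projection composed with the deflation $\left(\begin{smallmatrix}g\\ f'\end{smallmatrix}\right)$, and one application of Lemma \ref{lem1}(2) delivers both that $g$ is a deflation and the desired conflation. The two routes are equivalent in substance---the paper's retraction $t$ is precisely the splitting whose existence you verify by hand---but yours is more self-contained, trading the citations of the weak-pushout lemma and of \cite[Proposition 3.17]{NP} for some explicit matrix bookkeeping, and it isolates the one genuinely extriangulated input ($f_*\delta=0$) cleanly. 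Your treatment of (2) by passage to the opposite extriangulated category agrees with the paper's one-line appeal to duality.
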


\begin{proof}  We only prove (1). The assertion (2) can be proved  dually. Assume that there are $\mathbb{E}$-triangles $\xymatrix@=0.8cm{A\ar[r]^f&B\ar[r]^-{f'}&{\rm Cone}(f)\ar@{-->}[r]^-{\delta_f}&}$ and $\xymatrix{{\rm CoCone}(gf)\ar[r]^-{h}&A\ar[r]^{gf}&C\ar@{-->}[r]^{\delta_h}&}$. Thanks to (ET4), there exist morphisms of $\mathbb{E}$-triangles:

$$\xymatrix@R=0.5cm{{\rm CoCone}(gf)\ar[r]^-h\ar@{=}[d]&A\ar[r]^-{gf}\ar[d]_f&C\ar[d]^d\ar@{-->}[r]^{\delta_h}&\\
{\rm CoCone}(gf)\ar[r]^-k&B\ar[r]^{k'}\ar[d]_{f'}&E\ar[d]^e\ar@{-->}[r]^{\delta_k}&\\
&{\rm Cone}(f)\ar@{-->}[d]_{\delta_f}\ar@{=}[r]&{\rm Cone}(f).\ar@{-->}[d]^{\delta_d}&\\
&&&}$$
It follows from \cite[Lemma 3.13]{NP} that the upper right square of above diagram is a weak push-out, thus there is a morphism $t: E\rightarrow C$ such that the diagram commutes:
$$\xymatrix@R=0.5cm{A\ar[r]^f\ar[d]_{gf}&B\ar[d]_{k'}\ar@/^/[ddr]^g&\\
C\ar[r]^{d}\ar@/_/[drr]_{1_C}&E\ar@{-->}[rd]^t&\\
&&C.}$$
Since $td=1_C$ and $d$ is an inflation, it follows from Lemma \ref{td=1} that $t$ is a deflation. Since $k'$ is a deflation, $g=tk'$ is a deflation.

Assume that $\xymatrix{{\rm CoCone}(g)\ar[r]^-{g'}&B\ar[r]^-{g}&C\ar@{-->}[r]^-{\delta_{g'}}&}$ is an $\mathbb{E}$-triangle.
Thanks to \cite[Proposition 3.17]{NP}, there exist morphisms of $\mathbb{E}$-triangles:

$$\xymatrix@R=0.5cm{{\rm CoCone}(gf)\ar[r]^-h\ar[d]_x&A\ar[r]^-{gf}\ar[d]_f&C\ar@{=}[d]\ar@{-->}[r]^-{\delta_h}&\\
{\rm CoCone}(g)\ar[r]^-{g'}\ar[d]_{x'}&B\ar[r]^-{g}\ar[d]_{f'}&C\ar@{-->}[r]^{\delta_{g'}}&\\
{\rm Cone}(f)\ar@{=}[r]\ar@{-->}[d]_{\delta_x}&{\rm Cone}(f)\ar@{-->}[d]^{\delta_f}.&&\\
&&&}$$ So we have a conflation  $\xymatrix@C=0.6cm{{\rm CoCone}(gf)\ar[r]&{\rm CoCone}(g)\ar[r]&{\rm Cone}(f)}$ in $\mathcal{B}$.
\end{proof}

The Extension-Lifting Lemma, stated below, will play an important role in the sequel. It was proved for abelian categories in \cite[VIII, 3.1]{Beli-Reiten} and for exact categories in \cite[Lemma 2.7]{CLZ}.

\begin{lem} {\bf (The Extension-Lifting Lemma)}\label{lem3} \  Let $(\mathcal{B}, \mathbb{E},\mathfrak{s})$ be an extriangulated category and $X, Y\in\mathcal{B}$. Then $\mathbb{E}(X, Y)=0$ if and only if for any commutative diagram with $\mathbb{E}$-triangles

$$\xymatrix@R=0.5cm{&A\ar[r]^i\ar[d]_f&B\ar[r]^d\ar[d]^g&X\ar@{-->}[r]^{\eta}&\\
Y\ar[r]^c&C\ar[r]^p&D\ar@{-->}[r]^\delta&&}$$

\vskip5pt

\noindent there exists a morphism $h:B\rightarrow C$ such that $f=hi$ and $g=ph$.
\end{lem}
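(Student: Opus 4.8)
The plan is to deduce both implications by chasing the two long exact sequences of \cite[Corollary 3.12]{NP} attached to the top $\mathbb{E}$-triangle $\xymatrix@C=0.6cm{A\ar[r]^i&B\ar[r]^d&X\ar@{-->}[r]^\eta&}$ and to the bottom $\mathbb{E}$-triangle $\xymatrix@C=0.6cm{Y\ar[r]^c&C\ar[r]^p&D\ar@{-->}[r]^\delta&}$. Throughout I shall freely use the elementary facts, both immediate from those sequences, that in any $\mathbb{E}$-triangle $\xymatrix@C=0.6cm{A\ar[r]^x&B\ar[r]^y&C\ar@{-->}[r]^\delta&}$ one has $yx=0$ and $x_*\delta=0$.

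\emph{Necessity.} Assume $\mathbb{E}(X,Y)=0$ and consider the given square $gi=pf$. The first step is to factor $f$ through $i$. Applying $p_*$ to the class $f_*\eta\in\mathbb{E}(X,C)$ gives $p_*(f_*\eta)=(gi)_*\eta=g_*(i_*\eta)=0$, since $i_*\eta=0$ for the top $\mathbb{E}$-triangle; as the bottom $\mathbb{E}$-triangle yields an exact sequence $\mathbb{E}(X,Y)\xrightarrow{c_*}\mathbb{E}(X,C)\xrightarrow{p_*}\mathbb{E}(X,D)$ and $\mathbb{E}(X,Y)=0$, we get $f_*\eta=0$. The top $\mathbb{E}$-triangle then gives exactness of $\mathcal{C}(B,C)\xrightarrow{-\circ i}\mathcal{C}(A,C)\xrightarrow{\eta^\sharp}\mathbb{E}(X,C)$, so there is $h_0\colon B\to C$ with $h_0 i=f$. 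Now $(g-ph_0)i=gi-pf=0$, and exactness of $\mathcal{C}(X,D)\xrightarrow{-\circ d}\mathcal{C}(B,D)\xrightarrow{-\circ i}\mathcal{C}(A,D)$ (again from the top $\mathbb{E}$-triangle) produces $s\colon X\to D$ with $sd=g-ph_0$. Finally $s^*\delta\in\mathbb{E}(X,Y)=0$, so exactness of $\mathcal{C}(X,C)\xrightarrow{p\circ-}\mathcal{C}(X,D)\xrightarrow{\delta_\sharp}\mathbb{E}(X,Y)$ (from the bottom $\mathbb{E}$-triangle) yields $t\colon X\to C$ with $pt=s$. Setting $h:=h_0+td$ we obtain $hi=h_0 i+t(di)=f$ and $ph=ph_0+(pt)d=ph_0+sd=g$, as required.

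\emph{Sufficiency.} Conversely, assume the lifting property holds and let $\xi\in\mathbb{E}(X,Y)$ be arbitrary, realized by an $\mathbb{E}$-triangle $\xymatrix@C=0.6cm{Y\ar[r]^a&M\ar[r]^b&X\ar@{-->}[r]^\xi&}$. Apply the hypothesis to the diagram whose top row is this $\mathbb{E}$-triangle, whose bottom row is the split $\mathbb{E}$-triangle $\xymatrix@C=0.6cm{Y\ar[r]^{1_Y}&Y\ar[r]&0\ar@{-->}[r]^0&}$, with left vertical map $1_Y$ and right vertical map $0$; the square commutes trivially. This produces $h\colon M\to Y$ with $ha=1_Y$, whence $\xi=(1_Y)_*\xi=(ha)_*\xi=h_*(a_*\xi)=0$ because $a_*\xi=0$. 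As $\xi$ was arbitrary, $\mathbb{E}(X,Y)=0$.

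The only subtle point is the necessity direction, where the exact-category argument through a pullback of $p$ along $g$ is unavailable and must be replaced by tracking the obstruction classes inside $\mathbb{E}(X,-)$. The crux is the two-step correction: first factor $f$ through $i$ — possible precisely because $f_*\eta$ dies after $p_*$, hence lies in the image of $c_*$, which is zero — and then repair the discrepancy $g-ph_0$ by a morphism that factors through $d$ and lifts through $p$, which is possible since the relevant obstruction lands in $\mathbb{E}(X,Y)=0$. Everything else is a routine application of the long exact sequences.
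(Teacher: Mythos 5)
Your proof is correct, and the necessity direction takes a genuinely different route from the paper's. The paper constructs the weak pullback of $p$ along $g$ (via the dual of \cite[Corollary 3.16]{NP}), obtains an induced morphism $t:A\rightarrow K$, invokes Lemma \ref{lem2}(2) to see that $t$ is an inflation, builds a further diagram via the dual of \cite[Proposition 3.17]{NP} to show that the obstruction $\rho$ lies in $\mathbb{E}(X,Y)=0$ and hence that $g^*\delta=0$, splits the resulting deflation, and finally corrects the section using $\mathbb{E}(X,Y)=0$ a second time. You bypass all of this machinery: your two-step correction --- first killing $f_*\eta$ (which lies in the image of $c_*$, hence vanishes) to produce $h_0$ with $h_0i=f$, then writing $g-ph_0=sd$ and lifting $s$ through $p$ because $s^*\delta\in\mathbb{E}(X,Y)=0$ --- uses only the two long exact sequences of \cite[Corollary 3.12]{NP} together with functoriality of $\mathbb{E}$, and never touches (ET4), weak pullbacks, or Lemma \ref{lem2}. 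This is shorter, closer to the classical ${\rm Ext}$-theoretic argument for abelian categories, and each exactness step you invoke is indeed available at the objects you evaluate at. The sufficiency direction is essentially the paper's argument in mirror image: the paper applies the lifting to the degenerate square with top row $0\rightarrow X\xrightarrow{1_X}X$ to split the deflation $p$, whereas you apply it with bottom row $Y\xrightarrow{1_Y}Y\rightarrow 0$ to split the inflation $a$; both force the arbitrary extension to vanish. The only stylistic quibble is the stray subscripted matrix ${\tiny\begin{pmatrix}s_1\\s_2\end{pmatrix}}$-style notation you never use --- you introduce ``$1_A=sf$'' nowhere in your necessity argument --- but this does not affect the proof, which is complete as written.
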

\begin{proof} ``$\Leftarrow$". Let $\delta\in\mathbb{E}(X, Y)$. Then there exists a conflation $\xymatrix{Y\ar[r]^c&C\ar[r]^p&X}$ which realizes $\delta$, so we have a commutative diagram with $\mathbb{E}$-triangles
$$\xymatrix@R=0.5cm{&0\ar[r]\ar[d]&X\ar[r]^{1_X}\ar@{=}[d]&X\ar@{-->}[r]^{0}&\\
Y\ar[r]^c&C\ar[r]^p&X\ar@{-->}[r]^\delta&&}$$
Thus there is a morphism $h: X\rightarrow C$ such that $ph=1_X$. Then $\delta=0$. So $\mathbb{E}(X, Y)=0$.

``$\Rightarrow$". Assume $\mathbb{E}(X, Y)=0$. For any commutative diagram with $\mathbb{E}$-triangles
$$\xymatrix@R=0.5cm{&A\ar[r]^i\ar[d]_f&B\ar[r]^d\ar[d]^g&X\ar@{-->}[r]^{\eta}&\\
Y\ar[r]^c&C\ar[r]^p&D\ar@{-->}[r]^\delta&&}$$
there exists a commutative diagram of $\mathbb{E}$-triangles
$$\xymatrix@R=0.5cm{Y\ar[r]^x\ar@{=}[d]&K\ar[d]^k\ar[r]^y&B\ar[d]^g\ar@{-->}[r]^{g^*\delta}&\\
Y\ar[r]^c&C\ar[r]^p&D\ar@{-->}[r]^\delta&}$$
such that the right square is a weak pull-back by the dual of \cite[Corollary 3.16]{NP}.
Since $pf=gi$, there exists a morphism $t:A\rightarrow K$ such that $f=kt$ and $i=yt$.  Note that $i$ is an inflation and $y$ is a deflation. By Lemma \ref{lem2}(2),  $t$ is an inflation.
Assume that $\xymatrix{A\ar[r]^t&K\ar[r]^{t'}&L\ar@{-->}[r]^{\theta}&}$ is an $\mathbb{E}$-triangle. Since $i=yt$, there exists a commutative diagram with $\mathbb{E}$-triangles
$$\xymatrix@R=0.5cm{&Y\ar@{=}[r]\ar[d]_x&Y\ar[d]^{x'}&\\
A\ar[r]^t\ar@{=}[d]&K\ar[r]^{t'}\ar[d]_y&L\ar[d]^s\ar@{-->}[r]^{\theta}&\\
A\ar[r]^i&B\ar[r]^{d}\ar@{-->}[d]_{g^*\delta}&X\ar@{-->}[d]^{\rho}\ar@{-->}[r]^{\eta}&\\
&&&}$$
with $\theta=s^*\eta$ and $d^*\rho=g^*\delta$, by the dual of \cite[Proposition 3.17]{NP}. Hence $\rho=0$ as $\mathbb{E}(X, Y)=0$, which implies $g^*\delta=0$. So there exists a morphism $r: B\rightarrow K$ such that $yr=1_B$. Then
$p(f-kri)= gi - gyri = gi - gi = 0$. Thus there exists a morphism $m:A\rightarrow Y$ such that $f-kri=cm$. By exactness of the sequence $\xymatrix{\mathcal{B}(B, Y)\ar[r]^{\mathcal{B}(i, Y)}&\mathcal{B}(A, Y)\ar[r]&\mathbb{E}(X, Y)=0,}$ there is a morphism $n: B\rightarrow Y$ such that $m=ni$. Put $h=cn+kr: B \to C$. Then $f=hi$ and $ph=g$, as desired.
\end{proof}

\begin{definition}{\rm(\cite[Condition 5.8]{NP}) An extriangulated category $(\mathcal{B}, \mathbb{E},\mathfrak{s})$ satisfies {\it Condition} (WIC) if the following two conditions hold:

(1) \ For $f\in \mathcal{B}(A, B)$ and $g\in\mathcal{B}(B, C)$,  if  $gf$ is an inflation, then so is $f$.

(2) \ For $f\in \mathcal{B}(A, B)$ and $g\in\mathcal{B}(B, C)$, if $gf$ is a deflation, then so is $g$.}
\end{definition}

\begin{remark} {\rm (1)} \ If  $\mathcal{B}$ is an exact category, then $(\mathcal{B}, \mathbb{E},\mathfrak{s})$ satisfies  Condition {\rm (WIC)} if and only if $\mathcal{B}$ is weakly idempotent complete.

{\rm (2)} If $\mathcal{B}$ is a triangulated category, then Condition {\rm (WIC)} is automatically satisfied.
\end{remark}

It is worth noting that an extriangulated category $\mathcal{B}$ satisfies Condition (WIC) if and only if $\mathcal{B}$ is weakly idempotent complete, see \cite[Proposition C]{K}.

\subsection{Cotorsion pairs in extriangulated categories}

Let $(\mathcal{B}, \mathbb{E},\mathfrak{s})$ be an extriangulated category. For any class $\mathcal{X}$ and $\mathcal{Y}$ of objects of $\mathcal{B}$, we write $\mathbb{E}(\mathcal{X}, \mathcal{Y})=0$ provided that $\mathbb{E}(X, Y)=0$ for all $X\in\mathcal{X}$ and $Y\in\mathcal{Y}$. Put
$$\mathcal{X}^\perp=\{Z\in\mathcal{B}\mid \mathbb{E}(X, Z)=0, \ \forall \ X\in\mathcal{X}\},  \mbox{and} \ \ ^\perp\mathcal{Y}=\{Z\in\mathcal{B}\mid \mathbb{E}(Z, Y)=0, \ \forall \ Y\in\mathcal{Y}\}.$$

\begin{definition} {\rm Let $(\mathcal{B}, \mathbb{E},\mathfrak{s})$ be an extriangulated category and $\mathcal{Z}$ a class of objects in $\mathcal{B}$.

(1) \ \  $\mathcal{Z}$ is {\it closed under cocones of deflations},
if for any $\mathbb{E}$-triangle $\xymatrix{A\ar[r]^x&B\ar[r]^y&C\ar@{-->}[r]^\delta&}$ with $B\in\mathcal{Z}$ and $C\in\mathcal{Z}$,  one has $A\in\mathcal{Z}$.

(2) \ \  $\mathcal{Z}$ is {\it closed under cones of inflations}, if for any  \ $\mathbb{E}$-triangle $\xymatrix{A\ar[r]^x&B\ar[r]^y&C\ar@{-->}[r]^\delta&}$ with $A\in\mathcal{Z}$ and $B\in\mathcal{Z}$,  one has $C\in\mathcal{Z}$.

(3) \ \ $\mathcal{Z}$ is {\it closed under extensions}, if for any $\mathbb{E}$-triangle $\xymatrix{A\ar[r]^x&B\ar[r]^y&C\ar@{-->}[r]^\delta&}$ with $A\in\mathcal{Z}$ and $C\in\mathcal{Z}$,  one has $B\in\mathcal{Z}$.}
\end{definition}

\begin{definition} {\rm Assume that $(\mathcal{B}, \mathbb{E},\mathfrak{s})$ is an extriangulated category. Let $\mathcal{X},\mathcal{Y}\subseteq\mathcal{B}$ be a pair of full additive subcategories which are closed under isomorphisms and direct summands.

 (1) \ \ The pair $(\mathcal{X, Y})$ is  a {\it cotorsion pair}, provided that $\mathcal{X}^\perp=\mathcal{Y}$ and $^\perp\mathcal{Y}=\mathcal{X}$.

 (2) \ \ A  cotorsion pair $(\mathcal{X}, \mathcal{Y})$ is  {\it complete} if it satisfies the following conditions:

 For any $C\in\mathcal{B}$, there are $\mathbb{E}$-triangles
 $$\xymatrix{Y_C\ar[r]&X_C\ar[r]&C\ar@{-->}[r]&}~ \textrm{and} ~\xymatrix{C\ar[r]&Y^C\ar[r]&X^C\ar@{-->}[r]&}$$
\noindent with $X_C\in\mathcal{X}, \ X^C\in\mathcal{X}$,  $Y_C\in\mathcal{Y}$, and $Y^C\in\mathcal{Y}$.

(3) \ \ A cotorsion pair $(\mathcal{X}, \mathcal{Y})$ is  \emph{hereditary} if $\mathcal{X}$ is closed under cocones of deflations and $\mathcal{Y}$ is closed under cones of inflations.
\vspace{1mm}}
\end{definition}

\begin{rem}\label{fact2.18} Assume that $\mathcal{X}$ and $\mathcal{Y}$ are full additive subcategories of an extriangulated category $\mathcal{B}$ which are closed under isomorphisms and direct summands.
It is well-known that the pair $(\mathcal{X}, \mathcal{Y})$ is a complete cotorsion pair in $\mathcal{B}$ if and only if $\mathbb{E}(\mathcal{X}, \mathcal{Y})=0$ and for any $C\in\mathcal{B}$ there are $\mathbb{E}$-triangles
$$\xymatrix{Y_C\ar[r]&X_C\ar[r]&C\ar@{-->}[r]&}~\textrm{and}~\xymatrix{C\ar[r]&Y^C\ar[r]&X^C\ar@{-->}[r]&}$$

\noindent with  $X_C, \ X^C\in\mathcal{X}$ and  $Y_C, \ Y^C\in\mathcal{Y}$. See e.g. \cite[Remark 4.4]{NP}.
\vspace{1mm}
\end{rem}

\begin{prop}\label{pro0} Let $(\mathcal{B}, \mathbb{E},\mathfrak{s})$ be an extriangulated category and $(\mathcal{X}, \mathcal{Y})$ a complete cotorsion pair. Then the following  conditions are equivalent:

{\rm (1)} \  $\mathcal{Y}$ is closed under cones of inflations.

{\rm (2)} \  $\mathcal{X}$ is closed under cocones of deflations.

{\rm (3)} \   $(\mathcal{X}, \mathcal{Y})$ is hereditary.

Moreover, if $\mathcal{B}$ has enough projectives and injectives, then the above conditions are also equivalent to$:$

{\rm (4)} \   $\mathbb{E}^{i}(X, Y)=0$ for each $i\geqslant2$ and all $X\in{\mathcal{X}}$ and $Y\in{\mathcal{Y}}$.

\end{prop}

\begin{proof}

$(1)\Rightarrow(2)$. Assume that $\xymatrix{A\ar[r]^x&X_1\ar[r]^y&X_2\ar@{-->}[r]&}$ is an $\mathbb{E}$-triangle with $X_1, X_2\in\mathcal{X}$. We claim that any morphism $f: A\rightarrow Y$ with $Y\in\mathcal{Y}$ factors through $\omega=\mathcal{X}\cap\mathcal{Y}$.
Since $\mathbb{E}(X_2, Y)=0$, there is a morphism $g: X_1\rightarrow Y$ such that $f=gx$. As $(\mathcal{X}, \mathcal{Y})$ is a complete cotorsion pair, there is an $\mathbb{E}$-triangle $\xymatrix{X_1\ar[r]&Y'\ar[r]&X\ar@{-->}[r]&}$ with $X\in\mathcal{X}$, and $Y'\in\omega$ because $\mathcal{X}$ is closed under extensions. So $g$ can factor through $Y'\in \omega$ as $\mathbb{E}(X, Y)=0$.

Since $(\mathcal{X}, \mathcal{Y})$ is a complete cotorsion pair, there are $\mathbb{E}$-triangles $$\xymatrix{Y_1\ar[r]&X_3\ar[r]&A\ar@{-->}[r]&}~\mbox{and}~\xymatrix{X_3\ar[r]&W\ar[r]&X_4\ar@{-->}[r]&}$$
with $X_3, X_4\in\mathcal{X}$, $Y_1\in\mathcal{Y}$, and $W\in\omega$ because $\mathcal{X}$ is closed under extensions. By (ET4) there is a commutative diagram with $\mathbb{E}$-triangles

$$\xymatrix@R=0.5cm{Y_1\ar[r]^h\ar@{=}[d]&X_3\ar[r]\ar[d]&A\ar[d]^f\ar@{-->}[r]&\\
Y_1\ar[r]&W\ar[r]\ar[d]&Y''\ar[d]\ar@{-->}[r]&\\
&X_4\ar@{-->}[d]\ar@{=}[r]&X_4\ar@{-->}[d]&\\
&&&}$$
Note that  $Y''\in\mathcal{Y}$, since $\mathcal{Y}$ is closed under cones of inflations. Then $f$ factors through $\omega$, and hence
$f$ factors through $W$. Therefore, $1_{Y_1}$ factors through $h$ by \cite[Corollary 3.5]{NP}, which implies that $A$ is a direct summand of $X_3$. So  $A\in\mathcal{X}$.

$(2)\Rightarrow(1)$. The proof is dual to that of $(1)\Rightarrow(2)$.

$(3)\Rightarrow(1)$ is obvious.

$(2)\Rightarrow(3)$ is clear because of the equivalence of (1) and (2).

If $\mathcal{B}$ has enough projectives and enough injectives, then the equivalence of $(1)$ and $(4)$ follows from  \cite[Lemma 4.3]{LZ}.
\end{proof}


\subsection{Model structures}

Good and standard references are \cite{Q1}, \cite{Q2}, \cite{Hovey} and \cite{Hir}.

\begin{definition} {\rm (\cite{Q1}; \cite[p.233]{Q2}; \cite[Definition 1.2.3]{Hovey})} {\rm  A {\it  model structure} on a category $\mathcal{M}$ is a triple $({\rm CoFib, Fib, Weq})$ of classes of morphisms, where the morphisms in the three classes are respectively called {\it cofibrations, fibrations}, and {\it weak equivalences}, satisfying the following axioms:

{\bf (Two out of three axiom)} Let  $f: A\rightarrow B$ and $g: B\rightarrow C$ be a pair of composable morphisms in $\mathcal{M}$. If two of the morphisms $f, g, gf$ are weak equivalences, then so is the third one.

{\bf (Retract axiom)} If $g$ is a retract of $f$, and $f$ is a cofibration (a fibration, a weak equivalence, respectively), then so is $g$.

{\bf (Lifting axiom)} Cofibrations have the left lifting property with respect to all morphisms in ${\rm Fib}\cap{\rm Weq}$, and fibrations have the right lifting property with respect to all morphisms in ${\rm CoFib}\cap{\rm Weq}$. That is, given a commutative diagram

$$\xymatrix@R=0.5cm{A\ar[r]^a\ar[d]_i&X\ar[d]^p\\
B\ar[r]_b\ar@{-->}[ur]^h&Y}$$

\noindent If either $i\in{\rm CoFib}$ and $p\in{\rm Fib}\cap{\rm Weq}$,  or $i\in{\rm CoFib}\cap{\rm Weq}$ and $p\in{\rm Fib}$, then there exists a morphism $h: B\rightarrow X$ such that $a=hi, b=ph$.

{\bf (Factorization axiom)} Any morphism $f: A\rightarrow B$ admits factorizations $f=pi$ and $f=qj$, where $i\in{\rm CoFib}\cap{\rm Weq}, p\in{\rm Fib}, j\in{\rm CoFib}$ and $q\in{\rm Fib}\cap{\rm Weq}$.
\vspace{1mm}

A \emph{model category} is a category $\mathcal{M}$ with finite limits and colimits together with a model structure. }
\end{definition}
The morphisms in  ${\rm CoFib}\cap{\rm Weq}$ (respectively,  ${\rm Fib}\cap{\rm Weq}$) are called {\it trivial cofibrations} (respectively, {\it trivial fibrations}). Put ${\rm TCoFib}:={\rm CoFib}\cap{\rm Weq}$ and  ${\rm TFib}:={\rm Fib}\cap{\rm Weq}$.

%


\begin{fact}\label{fact2.21}{\rm (\cite{Q1})} Let $({\rm CoFib}, {\rm Fib}, {\rm Weq})$ be a model structure on a category $\mathcal{M}$ with zero object. Then

$(1)$ Both the classes ${\rm CoFib}$ and ${\rm Fib}$ are closed under compositions.

$(2)$ Isomorphisms are fibrations, cofibrations, and weak equivalences.

$(3)$ Cofibrations are closed under push-outs, i.e., given a push-out square

\begin{center}$\xymatrix@R=0.5cm{\bullet\ar[d]_i\ar[r]&\bullet\ar@{-->}[d]^{i'}\\\bullet\ar@{-->}[r]&\bullet}$
\end{center}
with $i\in{\rm CoFib}$, then $i'\in{\rm CoFib}$.

Also, trivial cofibrations are closed under push-outs.

$(4)$ Fibrations are closed under pull-backs, and trivial fibrations are closed under pull-backs.
\end{fact}

A striking property of a model structure is that any two classes of ${\rm CoFib, Fib, Weq}$ uniquely determines the third.

\begin{prop}\label{prop2.22} $($\cite[p.234]{Q2}$)$ Let $({\rm CoFib}, {\rm Fib}, {\rm Weq})$ be a model structure on a category $\mathcal{M}$. Then

$(1)$ Cofibrations are precisely those morphisms which have the left lifting property with respect to all the trivial fibrations.

 $(2)$ Trivial cofibrations are precisely those morphisms which have the left lifting property with respect to all the fibrations.

 $(3)$ Fibrations are precisely those morphisms which have the right lifting property with respect to all the trivial cofibrations.

  $(4)$ Trivial fibrations are precisely those morphisms which have the right lifting property with respect to all the  cofibrations.

 $(5)$ ${\rm Weq}={\rm TFib}\circ{\rm TCoFib}$.
\end{prop}

For a model structure $({\rm CoFib, Fib, Weq})$ on a category $\mathcal{M}$ with zero object, an object $X$ is {\it trivial} if $0\rightarrow X$ is a weak equivalence, or equivalently, $X\rightarrow 0$ is a weak equivalence. It is {\it cofibrant} if $0\rightarrow X$ is a cofibration, and it is  {\it fibrant} if $X\rightarrow 0$ is a fibration. An object is \emph{trivially cofibrant} (respectively, \emph{trivially fibrant}) if it is both trivial and cofibrant (respectively, fibrant).

\subsection{The homotopy category of a model structure}

Quillen's homotopy category of a model structure $($CoFib, Fib, Weq$)$ on category $\mathcal{M}$ is by definition the localization $\mathcal{M}[{\rm Weq}^{-1}]$
of $\mathcal{M}$ with respect to {\rm Weq}, which is denoted by ${\rm Ho}(\mathcal{M})$. This is an important object of study in algebra and topology. Let $\mathcal{M}$ be a model category. Then $\mathcal{M}$ has the initial object and the final object (and then
there is the notion of cofibrant objects and fibrant objects), and $\mathcal{M}$ has pull-backs and push-outs. Let $\mathcal{M}_{cf}$ be the full subcategory of  both fibrant and cofibrant objects of $\mathcal{M}$.  It is proved
that the left homotopy relation $\overset{l}{\sim}$ coincides with the right homotopy relation $\overset{r}{\sim}$ on $\mathcal{M}_{cf}$, which
is denoted by $\sim$. Then $\sim$ is an equivalence relation ideal of $\mathcal{M}_{cf}$ (\cite{Q1}, Lemmas 4, 5, and
their duals). The corresponding quotient category is denoted by $\pi\mathcal{M}_{cf}$, and the composition of
the embedding $\mathcal{M}_{cf}\hookrightarrow \mathcal{M}$  and the localization functor $\mathcal{M}\rightarrow {\rm Ho}(\mathcal{M})$ induces an equivalence
$\pi\mathcal{M}_{cf}\cong{\rm Ho}(\mathcal{M})$. See \cite[Theorem 1']{Q1}.

This important theorem has been named as the {\it fundamental theorem of model categories}.
See M. Hovey \cite[Theorem 1.2.10]{Hovey}; also J. Gillespie \cite{Gillespie}, and W. G. Dwyer-J. Spalinski \cite{DS}.

Some important categories can not be a model category. For example, the existence of pull-backs
 and push-outs can not be guaranteed in extriangulated categories. Thus, to apply the
fundamental theorem of model categories in extriangulated categories, one has to weaken the
condition that $\mathcal{M}$ is a model category. For this purpose, we consider the following conditions.

\begin{cond}\label{condition} Let $\mathcal{M}$ be a category satisfying the following conditions$:$

{\rm (i)} $\mathcal{M}$ has the initial object and final object$;$

{\rm (ii)} $\mathcal{M}$ has finite coproducts and finite products$;$

{\rm (iii)} There is a model structure $({\rm CoFib}, {\rm Fib}, {\rm Weq})$ on $\mathcal{M};$

{\rm (iv)} For any trivial cofibration $i: A\rightarrow B$ and any morphism $u: A\rightarrow C$, there exists a
weak push-out square

\begin{center}$\xymatrix@R=0.5cm{A\ar[r]^u\ar[d]_i&C\ar[d]^{i'}\\B\ar[r]&D}$\end{center}

\noindent such that $i'$ is also a trivial cofibration$;$

{\rm (v)} For any trivial fibration $p: C\rightarrow D$ and any morphism $u: B\rightarrow D$, there exists a
weak pull-back square

\begin{center}$\xymatrix@R=0.5cm{A\ar[r]\ar[d]_{p'}&C\ar[d]^{p}\\B\ar[r]^u&D}$\end{center}

\noindent such that $p'$ is also a trivial fibration.

\end{cond}

If $\mathcal{M}$ is a model category, then all the five conditions in Condition \ref{condition} are satisfied.

\vskip5pt

 By carefully checking  the proof of Quillen \cite[Theorem 1']{Q1} and Hovey \cite[Theorem 1.2.10]{Hovey}, one has the following.

\begin{thm}{\rm(Fundamental Theorem of Model Structures, \cite[Theorem 1']{Q1}, \cite[Theorem 1.2.10]{Hovey})}\label{thm}
Assume that $\mathcal M$ is a category
satisfying the conditions ${\rm (i)}$, ${\rm (ii)}$,  ${\rm (iii)}$, ${\rm (iv)}$, ${\rm (v)}$ in {\rm Condition \ref{condition}}. Then
the composition of
the embedding $\mathcal{M}_{cf}\hookrightarrow \mathcal{M}$  and the localization functor $\mathcal{M}\rightarrow {\rm Ho}(\mathcal{M})$ induces an equivalence
$\pi\mathcal{M}_{cf}\cong{\rm Ho}(\mathcal{M})$ as categories.
\end{thm}

 \begin{remark} We are grateful to the referee for pointing us to \cite[Theorem 3.2]{Egger}, which also gets Theorem \ref{thm} under the assumption that $\mathcal{M}$ is a pointed category with finite products and finite coproducts equipped with a model structure.
\end{remark}
%
%


\section{\bf Proof of Theorem \ref{thmA}}

Throughout $\mathcal{B}=(\mathcal{B}, \mathbb{E},\mathfrak{s})$ is an extriangulated category, and if the weakly idempotent completeness of $\mathcal{B}$ is needed, we will explicitly specify this condition.
Let $\mathcal{X}$ and $\mathcal{Y}$ be full additive subcategories of $\mathcal{B}$
which are closed under direct summands and isomorphisms. Put $\omega=\mathcal{X}\cap \mathcal{Y}$. Define five classes of morphisms in $\mathcal{B}$ as follows.

\begin{center}${\rm CoFib}_{\omega}=\{f: A\rightarrow B\mid f $ is an inflation with ${\rm Cone}(f)\in\mathcal{X}\}$.
\vspace{2mm}

${\rm Fib}_{\omega}=\{f: A\rightarrow B\mid f$ is $\omega$-epic, i.e., $\mathcal{B}(W, f)$ is surjective for any $W\in\omega\}$.
\vspace{2mm}

 ${\rm TCoFib}_{\omega}=\{f: A\rightarrow B\mid f$ is a splitting inflation with ${\rm Cone}(f)\in\omega\}$.  \qquad (3.1)
\vspace{2mm}

${\rm TFib}_{\omega}=\{f:A\rightarrow B\mid f$ is a deflation with ${\rm CoCone}(f)\in\mathcal{Y}\}$.
\vspace{2mm}

${\rm Weq}_{\omega}=\{f: A\rightarrow B\mid $ there is a deflation $(f, t): A\oplus W\rightarrow B$

 such that $W\in\omega$ and ${\rm CoCone}(f, t)\in\mathcal{Y}\}$. \end{center}
\vspace{2mm}

 Therefore, a morphism $f: A\rightarrow B$ is in ${\rm Weq}_\omega$ if and only if there is a commutative diagram
$$\xymatrix@R=0.5cm{A\ar[rr]^f\ar[dr]_{\tiny\begin{pmatrix}1\\0\end{pmatrix}}&&B\\&A\oplus W\ar[ur]_{(f,~ t)}&}$$
\noindent such that $W\in\omega, (f, t)$ is a deflation, and ${\rm CoCone}(f, t)\in\mathcal{Y}$. Since ${\tiny\begin{pmatrix}1\\0\end{pmatrix}}\in {\rm TCoFib}_\omega$, ${\rm Weq}_\omega$ can be reformulated as
$${\rm Weq}_\omega=\{gf|g\in{\rm TFib}_\omega, f\in{\rm TCoFib}_\omega\}.$$

\subsection{\bf Model structure induced by a hereditary complete cotorsion pair}

The aim of this subsection is to prove the ``if" part of Theorem \ref{thmA}.
Thus, we must check that the triple $({\rm CoFib}_{\omega}, {\rm Fib}_{\omega}, {\rm Weq}_{\omega})$ in Theorem \ref{thmA} satisfy the factorization, two-out-of-three, retract and lifting axioms for a model structure, under the condition that
$\mathcal{B}=(\mathcal{B}, \mathbb{E}, \mathfrak{s})$ is a weakly idempotent complete extriangulated category, $(\mathcal{X}, \mathcal{Y})$ is a hereditary complete cotorsion pair, such that $\omega=\mathcal{X}\cap \mathcal{Y}$ is contravariantly finite in
$\mathcal{B}$. However, not every lemma needs all these conditions, and we will explicitly specify what we exactly need.

\vskip5pt

First, we give a description  ${\rm TCoFib}_\omega$ and ${\rm TFib}_\omega$. Since $1_X\in {\rm TFib}_\omega$ and $1_X\in {\rm TCoFib}_\omega$ for any object $X\in\mathcal{B}$, we have ${\rm TCoFib}_\omega\subseteq {\rm Weq}_\omega$ and ${\rm TFib}_\omega\subseteq {\rm Weq}_\omega$.

\begin{prop}\label{pro1} \ Assume that $\mathcal{B}=(\mathcal{B}, \mathbb{E}, \mathfrak{s})$ is a weakly idempotent complete extriangulated category. Let $\mathcal{X}, \mathcal{Y}$ be full additive subcategories of $\mathcal{B}$ which are closed under direct summands and isomorphisms, and $\omega=\mathcal{X}\cap \mathcal{Y}$. If $\mathbb{E}(\mathcal{X}, \mathcal{Y})=0$, then we have
$${\rm TCoFib}_\omega={\rm CoFib}_\omega\cap{\rm Weq}_\omega,\quad{\rm TFib}_\omega={\rm Fib}_\omega\cap{\rm Weq}_\omega.$$
\end{prop}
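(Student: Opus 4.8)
My plan is to prove the two equalities separately; each splits into the inclusion ``$\subseteq$'', which is formal, and the inclusion ``$\supseteq$'', where the hypothesis $\mathbb{E}(\mathcal{X},\mathcal{Y})=0$ — together with Condition (WIC) and the closure of $\mathcal{X},\mathcal{Y}$ under direct summands — is exactly what I will need. The inclusions ``$\subseteq$'' are immediate: if $f\in{\rm TCoFib}_\omega$ then ${\rm Cone}(f)\in\omega\subseteq\mathcal{X}$ gives $f\in{\rm CoFib}_\omega$, while $1\in{\rm TFib}_\omega$ and ${\rm Weq}_\omega=\{gf\mid g\in{\rm TFib}_\omega,\ f\in{\rm TCoFib}_\omega\}$ give $f\in{\rm Weq}_\omega$; and if $f: A\to B$ is in ${\rm TFib}_\omega$, then $W=0$ exhibits $f$ in ${\rm Weq}_\omega$, and applying $\mathcal{C}(W,-)$ with $W\in\omega$ to the $\mathbb{E}$-triangle ${\rm CoCone}(f)\to A\xrightarrow{f}B$ — whose relevant connecting group $\mathbb{E}(W,{\rm CoCone}(f))$ vanishes since $W\in\mathcal{X}$ and ${\rm CoCone}(f)\in\mathcal{Y}$ — shows that $f$ is $\omega$-epic, so $f\in{\rm Fib}_\omega$.

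For ${\rm Fib}_\omega\cap{\rm Weq}_\omega\subseteq{\rm TFib}_\omega$ I would use an ``automorphism'' trick. Let $f: A\to B$ be $\omega$-epic, and from $f\in{\rm Weq}_\omega$ fix a deflation $(f,t): A\oplus W\to B$ with $W\in\omega$ and ${\rm CoCone}(f,t)\in\mathcal{Y}$. Since $W\in\omega$ and $f$ is $\omega$-epic, $t$ lifts through $f$, say $t=f\widetilde{t}$ with $\widetilde{t}: W\to A$. The automorphism $\psi=\left(\begin{smallmatrix}1&-\widetilde{t}\\0&1\end{smallmatrix}\right)$ of $A\oplus W$ satisfies $(f,t)\psi=(f,0)=f\circ(1,0)$ with $(1,0): A\oplus W\to A$ a split deflation; as $(f,t)\psi$ is a deflation, Condition (WIC)(b) forces $f$ to be a deflation. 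By additivity of the realization $\mathfrak{s}$, the deflation $(f,0)$ realizes the direct sum of the $\mathbb{E}$-triangle of $f$ with the split $\mathbb{E}$-triangle $W\xrightarrow{1}W\to 0$, whence ${\rm CoCone}(f,0)\cong{\rm CoCone}(f)\oplus W$; on the other hand ${\rm CoCone}(f,0)\cong{\rm CoCone}(f,t)\in\mathcal{Y}$ since $\psi$ is invertible. As $\mathcal{Y}$ is closed under summands, ${\rm CoCone}(f)\in\mathcal{Y}$, so $f\in{\rm TFib}_\omega$.

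For ${\rm CoFib}_\omega\cap{\rm Weq}_\omega\subseteq{\rm TCoFib}_\omega$ the plan is to first pin down the cone, then show the inflation splits. Let $f: A\to B$ be an inflation with $C:={\rm Cone}(f)\in\mathcal{X}$, and fix a witnessing deflation $(f,t): A\oplus W\to B$ with $W\in\omega$, $Y':={\rm CoCone}(f,t)\in\mathcal{Y}$. Applying Lemma~\ref{lem2}(2) to the factorization $f=(f,t)\circ\binom{1}{0}$ (an inflation written as a deflation after the inflation $\binom{1}{0}$, which has cone $W$) produces a conflation $Y'\to W\xrightarrow{c}C\dashrightarrow\delta$; since $C\in\mathcal{X}$ and $Y'\in\mathcal{Y}$ we have $\delta\in\mathbb{E}(C,Y')=0$, so $W\cong Y'\oplus C$ and $c$ is a split deflation with ${\rm CoCone}(c)\cong Y'$ — and $C$, being a summand of $W\in\omega$, lies in $\omega$. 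To see that $f$ splits, write its $\mathbb{E}$-triangle $A\xrightarrow{f}B\xrightarrow{g}C\dashrightarrow\eta$; by following the morphisms through the octahedron underlying Lemma~\ref{lem2}(2) — or, more directly, by applying (ET3) to the commutative square with vertical arrows $\binom{1}{0}$ and $(f,t)$ and using that $(0,1): A\oplus W\to W$ is a split epimorphism — one identifies $c$ with the composite $g\circ t: W\to C$. A section $\sigma: C\to W$ of $c=g\,t$ then gives a section $t\sigma$ of $g$, so $\eta=0$ by the exact sequence $\mathcal{C}(C,B)\xrightarrow{g\circ-}\mathcal{C}(C,C)\xrightarrow{\eta_\sharp}\mathbb{E}(C,A)$; hence $f$ is a splitting inflation with cone in $\omega$, i.e.\ $f\in{\rm TCoFib}_\omega$.

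The hard part will be that last identification — that the deflation $c$ coming out of Lemma~\ref{lem2}(2) is, up to the canonical isomorphism ${\rm Cone}\binom{1}{0}\cong W$, the map $g\circ t$ — because it cannot be read off from the statement of Lemma~\ref{lem2} and requires tracing morphisms through its octahedral proof (through (ET4), (ET3) and \cite[Proposition~3.17]{NP}); an alternative, but equally octahedron-dependent, route is to prove directly that ${\rm CoCone}(g\,t)\cong Y'$ by comparing the two conflations furnished by Lemma~\ref{lem1}(2) applied to the two factorizations of $g\circ(f,t)=(0,g\,t)$. Apart from this, the only points requiring care — all routine — are the several ``${\rm CoCone}$ of a direct sum'' identifications (via the additivity of $\mathfrak{s}$) and the fact that a deflation admitting a section realizes the split $\mathbb{E}$-extension (via \cite[Corollary~3.12]{NP}).
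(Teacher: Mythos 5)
Your proposal is correct, and its skeleton coincides with the paper's: both inclusions $\subseteq$ are formal, and for the reverse inclusions both arguments hinge on extracting the conflation ${\rm CoCone}(f,t)\to W\to{\rm Cone}(f)$ from the factorization $f=(f,t)\circ\left(\begin{smallmatrix}1\\0\end{smallmatrix}\right)$, killing its extension class by $\mathbb{E}(\mathcal{X},\mathcal{Y})=0$ to place ${\rm Cone}(f)$ in $\omega$ and to produce a section of $g$, and, on the fibration side, lifting $t$ through the $\omega$-epic $f$ and invoking (WIC) to see $f$ is a deflation. The tactical differences are worth noting. For ${\rm Fib}_\omega\cap{\rm Weq}_\omega\subseteq{\rm TFib}_\omega$ your automorphism trick $(f,t)\psi=(f,0)$ combined with additivity of $\mathfrak{s}$ gives ${\rm CoCone}(f,t)\cong{\rm CoCone}(f)\oplus W$ in one stroke; the paper instead applies (ET3)$^{\rm op}$ twice to manufacture maps $g,h$ between the two cocones with $hg$ an isomorphism and concludes by the summand-closure of $\mathcal{Y}$ — your route is shorter and arguably cleaner, the paper's avoids any appeal to additivity of the realization. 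For ${\rm CoFib}_\omega\cap{\rm Weq}_\omega\subseteq{\rm TCoFib}_\omega$, the step you flag as the ``hard part'' — identifying the deflation $W\to{\rm Cone}(f)$ with $g\circ t$ — is precisely why the paper cites the dual of \cite[Proposition 3.17]{NP} rather than the bare Lemma \ref{lem2}(2): that result delivers the whole commutative diagram at once, including the square $s\circ(0,1)=g\circ(f,t)$, from which $s=gt$ follows immediately by precomposing with the inclusion of the $W$-summand; a section $\sigma$ of $s$ (available since $\rho=0$) then gives the section $t\sigma$ of $g$ and hence the splitting of $f$. So your concern is legitimate if one only uses the statement of Lemma \ref{lem2}, but it dissolves once the diagrammatic form is used, which is the paper's (and effectively your second proposed) resolution. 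No gap remains in either direction.
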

\begin{proof} We first prove ${\rm TCoFib}_\omega={\rm CoFib}_\omega\cap{\rm Weq}_\omega$. It is easy to see that ${\rm TCoFib}_\omega\subseteq {\rm CoFib}_\omega\cap{\rm Weq}_\omega$.
Conversely, if $f: A\rightarrow B$ is in ${\rm CoFib}_\omega\cap{\rm Weq}_\omega$, then $f$ is an inflation with ${\rm Cone}(f)\in\mathcal{X}$ and there is an $\mathbb{E}$-triangle $\xymatrix{Y\ar[r]^-y&A\oplus W\ar[r]^-{(f, t)}&B\ar@{-->}[r]^{\delta}&}$ with $W\in\omega$ and $Y\in\mathcal{Y}$.
Suppose that $\xymatrix{A\ar[r]^f&B\ar[r]^-{g}&{\rm Cone}(f)\ar@{-->}[r]^-{\eta}&}$ is an $\mathbb{E}$-triangle with ${\rm Cone}(f)\in\mathcal{X}$. Since $f=(f, t){\tiny\begin{pmatrix}1\\0\end{pmatrix}}$, there exists a commutative diagram of $\mathbb{E}$-triangles
$$\xymatrix@R=0.7cm{&Y\ar@{=}[r]\ar[d]_y&Y\ar[d]^{t}&\\
A\ar[r]^{\binom{1}{0}\ \ }\ar@{=}[d]&A\oplus W\ar[r]^{(0, 1)}\ar[d]_{(f,t)}&W\ar[d]^s\ar@{-->}[r]^0&\\
A\ar[r]^f&B\ar[r]^-{g}\ar@{-->}[d]_{\delta}&{\rm Cone}(f)\ar@{-->}[d]^{\rho}\ar@{-->}[r]^-{\eta}&\\
&&&}$$
with $\delta=g^*\rho$ by the dual of \cite[Proposition 3.17]{NP}. Since $\mathbb{E}(\mathcal{X}, \mathcal{Y})=0$, we have $\rho=0$, which implies that ${\rm Cone}(f)\in\omega$ as ${\rm Cone}(f)$ is  a direct summand of $W$. Then $s$
is a splitting deflation and hence $g$ is a splitting deflation. Thus $f$ is a splitting inflation, and $f\in{\rm TCoFib}_\omega$ by definition. This proves ${\rm TCoFib}_\omega={\rm CoFib}_\omega\cap{\rm Weq}_\omega$.

Next we prove ${\rm TFib}_\omega={\rm Fib}_\omega\cap{\rm Weq}_\omega$. Since $\mathbb{E}(\mathcal{X}, \mathcal{Y})=0$, ${\rm TFib}_\omega\subseteq{\rm Fib}_\omega$, and hence ${\rm TFib}_\omega\subseteq{\rm Fib}_\omega\cap{\rm Weq}_\omega$. Conversely, if $f\in {\rm Fib}_\omega\cap{\rm Weq}_\omega$, then there is an $\mathbb{E}$-triangle $$\xymatrix{{\rm CoCone}(f,t)\ar[r]^{\quad x}&A\oplus W\ar[r]^{\quad(f,~ t)}&B\ar@{-->}[r]^{\delta}&}$$ with $W\in\omega$ and ${\rm CoCone}(f,t)\in\mathcal{Y}.$
Since $f$ is $\omega$-epic, there is a morphism $s:W\rightarrow A$ such that $t=fs$. Then $(f, t)=f(1, s)$.
Since a weakly idempotent complete extriangulated category satisfies Condition (WIC), it follows that $f$ is a deflation. So we have a commutative diagram with $\mathbb{E}$-triangles
$$\xymatrix@R=0.5cm{{\rm CoCone}(f)\ar[r]^-r\ar@{-->}[d]_g&A\ar[r]^-{f}\ar[d]_{\binom{1}{0}}&B\ar@{=}[d]\ar@{-->}[r]^{\eta}&\\
{\rm CoCone}(f,t)\ar[r]^-x\ar@{-->}[d]_h&A\oplus W\ar[r]^-{(f, t)}\ar[d]_{(1, s)}&B\ar@{=}[d]\ar@{-->}[r]^{\delta}&\\
{\rm CoCone}(f)\ar[r]^-r&A\ar[r]^-{f}&B\ar@{-->}[r]^{\eta}&}$$
by (ET3)$^{\rm op}$. Therefore, $hg$ is an isomorphism, which implies that ${\rm CoCone}(f)$ is a direct summand of ${\rm CoCone}(f, t)$, thus ${\rm CoCone}(f)\in\mathcal{Y}$. So $f\in {\rm TFib}_\omega$. This completes the proof.
\end{proof}

\begin{lem}\label{pro2} \ Let $\mathcal{B}=(\mathcal{B}, \mathbb{E},\mathfrak{s})$ be an extriangulated category,
let $\mathcal{X}$ and $\mathcal{Y}$ be full subcategories of $\mathcal{B}$ which are closed under extensions, direct summands, and isomorphisms. Then the classes
${\rm CoFib}_\omega, ~{\rm Fib}_\omega,~ {\rm TCoFib}_\omega$ and ${\rm TFib}_\omega$ are closed under compositions.
\end{lem}
\begin{proof} For ${\rm TCoFib}_\omega$, this follows from  the fact that splitting inflations are closed under compositions, that $\omega$ is extension closed, and Lemma \ref{lem1}. Similarly for the other cases.
\end{proof}

The following is \textbf{the Factorization axiom}. By Proposition \ref{pro1} we already have ${\rm TCoFib}_\omega={\rm CoFib}_\omega\cap{\rm Weq}_\omega$ and ${\rm TFib}_\omega={\rm Fib}_\omega\cap{\rm Weq}_\omega.$

\begin{prop}\label{pro3} \ Assume that $\mathcal{B}=(\mathcal{B}, \mathbb{E}, \mathfrak{s})$ is a weakly idempotent complete extriangulated category. Let $(\mathcal{X}, \mathcal{Y})$ be a complete cotorsion pair,
such that $\omega=\mathcal{X}\cap \mathcal{Y}$ is contravariantly finite in $\mathcal{B}$. Then we have ${\rm Mor}(\mathcal{B})= {\rm TFib}_\omega\circ{\rm CoFib}_\omega={\rm Fib}_\omega\circ {\rm TCoFib}_\omega$.
\end{prop}
\begin{proof} \ We can prove ${\rm Mor}(\mathcal{B})={\rm Fib}_\omega\circ {\rm TCoFib}_\omega$ by the same argument of \cite[The first factorization]{CLZ}. This needs the condition that
$\omega =\mathcal{X}\cap \mathcal{Y}$ is contravariantly finite in $\mathcal{B}$.

Next we prove ${\rm Mor}(\mathcal{B})= {\rm TFib}_\omega\circ{\rm CoFib}_\omega$. For every morphism $f: A\rightarrow B$,
since $(\mathcal{X}, \mathcal{Y})$ is a complete cotorsion pair,
there exists an $\mathbb{E}$-triangle $\xymatrix{Y_B\ar[r]&X_B\ar[r]^{t}&B\ar@{-->}[r]&}$ with $X_B\in\mathcal{X}$ and $Y_B\in\mathcal{Y}$. Hence there is a deflation $(f,~ t): A\oplus X_B\rightarrow B$ by the dual of \cite[Corollary 3.16]{NP}. Assume that $\xymatrix{K\ar[r]&A\oplus X_B\ar[r]^{\quad(f,~ t)}&B\ar@{-->}[r]&}$ is an $\mathbb{E}$-triangle. There exists an  $\mathbb{E}$-triangle $\xymatrix@R=0.6cm{K\ar[r]&Y^K\ar[r]&X^K\ar@{-->}[r]&}$ with $X^K\in\mathcal{X}$ and $Y^K\in\mathcal{Y}$. It follows from \cite[Proposition 3.15]{NP} that there is a commutative diagram of $\mathbb{E}$-triangles as follows.
$$\xymatrix{K\ar[r]\ar[d]&A\oplus X_B\ar[r]^-{(f, t)}\ar[d]_{i}&B\ar@{=}[d]\ar@{-->}[r]&\\
Y^K\ar[r]^{g}\ar[d]&E\ar[r]^{h}\ar[d]&B\ar@{-->}[r]&\\
X^K\ar@{=}[r]\ar@{-->}[d]&X^K\ar@{-->}[d]&&\\
&&&}$$
Then $i\in {\rm CoFib}_\omega$ and $h\in{\rm TFib}_\omega$ by definition.

Since $\binom{1}{0}: A\rightarrow A\oplus X_B\in{\rm CoFib}_\omega,$ one has $i\circ\binom{1}{0}\in{\rm CoFib}_\omega$ by Lemma \ref{pro2}. Thus $f=h(i\circ\binom{1}{0})\in  {\rm TFib}_\omega\circ{\rm CoFib}_\omega$, as desired. \end{proof}

\begin{lem}\label{lem5} \ Assume that $\mathcal{B}=(\mathcal{B}, \mathbb{E}, \mathfrak{s})$ is a weakly idempotent complete extriangulated category. Let $(\mathcal{X}, \mathcal{Y})$ be a hereditary complete cotorsion pair,
such that $\omega=\mathcal{X}\cap \mathcal{Y}$ is contravariantly finite in $\mathcal{B}$.
Then ${\rm TCoFib}_\omega\circ{\rm TFib}_\omega\subseteq{\rm Weq}_\omega.$ Furthermore, the class ${\rm Weq}_\omega$ is closed under compositions.
\end{lem}
\begin{proof} Let $a: A\to A'$ be in ${\rm TFib}_\omega$ and $x': A'\to B'$ in ${\rm TCoFib}_\omega$. Hence there are some $x\in{\rm CoFib}_\omega$ and $b\in{\rm TFib}_\omega$ such that $x'a=bx$ by Proposition \ref{pro3}. It suffices to show $x\in{\rm TCoFib}_\omega$. There are $\mathbb{E}$-triangles $\xymatrix{A'\ar[r]^{x'}&B'\ar[r]^{y'}&W\ar@{-->}[r]^{\delta'}&}$, $\xymatrix{A\ar[r]^{x}&B\ar[r]^{y}&X\ar@{-->}[r]^\delta&}$, $\xymatrix{Y\ar[r]^{k}&A\ar[r]^{a}&A'\ar@{-->}[r]^\eta&}$ and $\xymatrix{Y'\ar[r]^{k'}&B\ar[r]^{b}&B'\ar@{-->}[r]^\theta&}$ with $Y, Y'\in\mathcal{Y}, X\in\mathcal{X}, W\in\omega$ such that $x'$ is a splitting inflation by definition. Therefore there is a commutative diagram with $\mathbb{E}$-triangles
$$\xymatrix@R=0.7cm{
  Y\ar@{}[dr]|{\circlearrowleft} \ar[d]_{k}\ar[r]^m  & Y' \ar@{}[dr]|{\circlearrowleft}\ar[d]_{k'}\ar[r]^{m'}  & Y'' \ar[d]_{k''}\ar@{-->}[r]^{\delta''}  & \\
  A\ar@{}[dr]|{\circlearrowleft} \ar[d]_{a} \ar[r]^{x} & B \ar@{}[dr]|{\circlearrowleft}\ar[d]_{b} \ar[r]^{y} &X \ar[d]_{c} \ar@{-->}[r]^{\delta} &  \\
  A' \ar@{-->}[d]_{\eta} \ar[r]^{x'} & B' \ar@{-->}[d]_{\theta} \ar[r]^{y'} & W \ar@{-->}[d]_{\rho} \ar@{-->}[r]^{\delta'} &  \\
  &  & &  }
  $$
such that $(k, k', k''), (m, x, x')$ and $(m', y, y')$ are morphisms of $\mathbb{E}$-triangles by \cite[Lemma 5.9]{NP}. Note that $Y, Y'\in\mathcal{Y}$ and $\mathcal{Y}$ is closed under cones of inflations. Then we have $Y''\in\mathcal{Y}$. It is easy to see that $X\in\omega$ as $\mathcal{Y}$ is closed under extensions. Since $\mathbb{E}(\mathcal{X}, \mathcal{Y})=0$, one has $\rho=0$, which implies $Y''\in\omega$ because $Y''$ is a direct summand of $X$. So  $\delta''=0$ as $\delta''\in\mathbb{E}(\mathcal{X}, \mathcal{Y})$.
By assumption $x'$ is a splitting inflation, thus $y'$ is a splitting deflation, and hence $\mathcal{B}(X, y'): \mathcal{B}(X, B')\longrightarrow\mathcal{B}(X, W)$ is surjective.
Applying the functor $\mathcal{B}(X, -)$ to the second and the third column of the above commutative diagram,
one sees that $\mathcal{B}(X, y): \mathcal{B}(X, B)\rightarrow \mathcal{B}(X, X)$ is surjective by Snake Lemma. Therefore, $x$ is a splitting inflation. Thus $x\in{\rm TCoFib}_\omega$.

\vskip5pt

Using ${\rm Weq}_\omega = {\rm TFib}_\omega \circ {\rm TCoFib}_\omega$  and ${\rm TCoFib}_\omega\circ{\rm TFib}_\omega\subseteq{\rm Weq}_\omega,$
one can see that ${\rm Weq}_\omega$ is closed under compositions :
\begin{align*}{\rm Weq}_\omega\circ {\rm Weq}_\omega & = {\rm TFib}_\omega \circ {\rm TCoFib}_\omega \circ {\rm TFib}_\omega \circ {\rm TCoFib}_\omega \subseteq {\rm TFib}_\omega \circ {\rm Weq}_\omega \circ {\rm TCoFib}_\omega\\
&= {\rm TFib}_\omega \circ {\rm TFib}_\omega \circ {\rm TCoFib}_\omega \circ {\rm TCoFib}_\omega \subseteq {\rm TFib}_\omega \circ {\rm TCoFib}_\omega = {\rm Weq}_\omega\end{align*}
by Lemma \ref{pro2}.\end{proof}

\begin{lem}\label{lem6} \ Let $\mathcal{B}=(\mathcal{B}, \mathbb{E},\mathfrak{s})$ be a weakly idempotent complete extriangulated category,
let $\mathcal{X}$ and $\mathcal{Y}$ be full subcategories of $\mathcal{B}$ which are closed under extensions, direct summands, and isomorphisms.
If $f:A\rightarrow B$ is a morphism in ${\rm Weq}_\omega$, then the morphism $(f, t): A\oplus W\rightarrow B$ is in ${\rm TFib}_\omega$ for any right $\omega$-approximation morphism $t: W\rightarrow B$.
\end{lem}

\begin{proof} \ Since $f:A\rightarrow B$ is in ${\rm Weq}_\omega$, by construction there is a morphism $(f, t'): A\oplus W'\rightarrow B$ in ${\rm TFib}_\omega$ with $W'\in\omega$ and ${\rm CoCone}(f, t')\in\mathcal{Y}$. Let $t: W\rightarrow B$ be any right $\omega$-approximation. Then there is a morphism $s: W'\rightarrow W$ such that $t'=ts$. Since $(f, t')$ is a deflation and $(f, t')=(f, t){\tiny \begin{pmatrix}1&0\\0&s\end{pmatrix}}$,
it follows from the weakly idempotent completeness of $\mathcal B$ that $(f, t): A\oplus W\rightarrow B$ is also a deflation. It remains to prove ${\rm CoCone}(f, t)\in\mathcal{Y}$.
Note that  ${\tiny \begin{pmatrix}1&0\\0&1\\0&0\end{pmatrix}}: A\oplus W'\rightarrow A\oplus W'\oplus W$ is a splitting inflation with ${\rm Cone}{\tiny \begin{pmatrix}1&0\\0&1\\0&0\end{pmatrix}}=W\in\omega$.
Since  $(f, t')=(f, t', t){\tiny \begin{pmatrix}1&0\\0&1\\0&0\end{pmatrix}}$ is a deflation,
 $(f, t', t)$ is a deflation. Therefore, there is a commutative diagram with $\mathbb{E}$-triangles
$$\xymatrix{{\rm CoCone}(f, t')\ar[r]^i\ar[d]&A\oplus W'\ar[r]^-{(f, t')}\ar[d]_{\tiny \begin{pmatrix}1&0\\0&1\\0&0\end{pmatrix}}&B\ar@{=}[d]\ar@{-->}[r]&\\
{\rm CoCone}(f, t', t)\ar[r]^{i'}\ar[d]&A\oplus W'\oplus W\ar[r]^-{(f, t', t)}\ar[d]&B\ar@{-->}[r]&\\
W\ar@{=}[r]\ar@{-->}[d]&W\ar@{-->}[d]&&\\
&&&}$$
by \cite[Proposition 3.17]{NP}. Hence ${\rm CoCone}(f, t', t)\in\mathcal{Y}$ because $\mathcal{Y}$ is closed under extensions. Since $(f, t)=(f, t', t){\tiny \begin{pmatrix}1&0\\0&0\\0&1\end{pmatrix}}$ and $(f, t', t)=(f, t){\tiny \begin{pmatrix}1&0&0\\0&s&1\end{pmatrix}}$, there is a  commutative diagram of $\mathbb{E}$-triangles
$$\xymatrix{{\rm CoCone}(f, t)\ar[r]\ar@{-->}[d]_g&A\oplus W\ar[r]^{(f, t)}\ar[d]_{\tiny\begin{pmatrix}1&0\\0&0\\0&1\end{pmatrix}}&B\ar@{=}[d]\ar@{-->}[r]&\\
{\rm CoCone}(f, t', t)\ar[r]\ar@{-->}[d]_h&A\oplus W'\oplus W\ar[r]^{\qquad(f, t', t)}\ar[d]_{\tiny\begin{pmatrix}1&0&0\\0&s&1\end{pmatrix}}&B\ar@{=}[d]\ar@{-->}[r]&\\
{\rm CoCone}(f,t)\ar[r]&A\oplus W\ar[r]^{(f, t)}&B\ar@{-->}[r]&}$$
by (ET3)$^{\rm op}$. Note that ${\tiny\begin{pmatrix}1&0&0\\0&s&1\end{pmatrix}}
{\tiny\begin{pmatrix}1&0\\0&0\\0&1\end{pmatrix}}={\tiny\begin{pmatrix}1&0\\0&1\end{pmatrix}}$.  So we have a commutative diagram of $\mathbb{E}$-triangles
$$\xymatrix@R=0.5cm{{\rm CoCone}(f,t)\ar[r]\ar@{-->}[d]_{hg}&A\oplus W\ar[r]^{(f, t)}\ar@{=}[d]&B\ar@{=}[d]\ar@{-->}[r]&\\
{\rm CoCone}(f,t)\ar[r]&A\oplus W\ar[r]^{(f, t)}&B\ar@{-->}[r]&.}$$
Therefore, $hg$ is an isomorphism, which implies that ${\rm CoCone}(f,t)$ is a direct summand of ${\rm CoCone}(f, t', t)$, thus ${\rm CoCone}(f,t)\in\mathcal{Y}$.  This completes the proof.
\end{proof}

\begin{lem}\label{lem7} \ Assume that $\mathcal{B}=(\mathcal{B}, \mathbb{E}, \mathfrak{s})$ is a weakly idempotent complete extriangulated category. Let $(\mathcal{X}, \mathcal{Y})$ be a hereditary cotorsion pair,
such that $\omega=\mathcal{X}\cap \mathcal{Y}$ is contravariantly finite in $\mathcal{B}$. Let $f:A\rightarrow B$ and $g:B\rightarrow C$ be morphisms in $\mathcal{B}$ with $f\in{\rm TFib}_\omega$ and $gf\in{\rm Weq}_\omega$. Then $g\in{\rm Weq}_\omega$.
\end{lem}

\begin{proof} Let $t:W\rightarrow C$ be a right $\omega$-approximation of $C$. By Lemma \ref{lem6} that $(gf, t): A\oplus W\rightarrow C$ is in ${\rm TFib}_{\omega}$, i.e., $(gf, t)$ is a deflation with ${\rm CoCone}(gf, t)\in\mathcal{Y}$.
By $(gf, t)=(g, t){\tiny\begin{pmatrix}f&0\\0&1\end{pmatrix}}$ one sees that $(g, t):B\oplus W\rightarrow C$ is a deflation. Then there is a commutative diagram of $\mathbb{E}$-triangles
$$\xymatrix@R=0.7cm{{\rm CoCone}{\tiny\begin{pmatrix}f&0\\0&1\end{pmatrix}}\ar[d]\ar@{=}[r]&{\rm CoCone}{\tiny\begin{pmatrix}f&0\\0&1\end{pmatrix}}\ar[d]&&\\
{\rm CoCone}(gf,  t)\ar[r]\ar[d]&A\oplus W\ar[r]^-{(gf,t)}\ar[d]_{\tiny\begin{pmatrix}f&0\\0&1\end{pmatrix}}&C\ar@{=}[d]\ar@{-->}[r]&\\
{\rm CoCone}(g,t)\ar[r]\ar@{-->}[d]&B\oplus W\ar[r]^-{(g, t)}\ar@{-->}[d]&C\ar@{-->}[r]&\\
&&&}$$
by \cite[Remark 2.22]{NP}. Since ${\rm CoCone}{\tiny\begin{pmatrix}f&0\\0&1\end{pmatrix}}={\rm CoCone}(f)\in\mathcal{Y}$, which implies ${\rm CoCone}(g, t)\in\mathcal{Y}$ because $\mathcal{Y}$ is closed under cones of inflations. So $(g, t)\in{\rm TFib}_\omega$, and we have $g\in{\rm Weq}_\omega$ by definition.
\end{proof}

\begin{lem}\label{lem8} \ Assume that $\mathcal{B}=(\mathcal{B}, \mathbb{E}, \mathfrak{s})$ is a weakly idempotent complete extriangulated category. Let $(\mathcal{X}, \mathcal{Y})$ be a hereditary cotorsion pair,
such that $\omega=\mathcal{X}\cap \mathcal{Y}$ is contravariantly finite in $\mathcal{B}$. Let $f:A\rightarrow B$ and $g:B\rightarrow C$ be morphisms in $\mathcal{B}$ such that $f\in{\rm Weq}_\omega$ and $gf\in{\rm Weq}_\omega$. Then $g\in{\rm Weq}_\omega$.
\end{lem}

\begin{proof} Since $f\in{\rm Weq}_\omega$, there is a morphism $(f, t): A\oplus W\rightarrow B$ with $W\in\omega$ and $(f, t)\in{\rm TFib}_\omega$. To prove $g\in {\rm Weq}_\omega$, it suffices to prove $(gf, gt)\in{\rm Weq}_\omega$ by the commutative diagram

$$\xymatrix@R=0.5cm{A\oplus W\ar[rr]^{(gf, gt)}\ar[rd]_{(f, t)}&&C\\
&B\ar[ur]_g&}$$
and Lemma \ref{lem7}.

Let $t':W'\rightarrow C$ be a right $\omega$-approximation of $C$. It follows from Lemma \ref{lem6} that $(gf, t'): A\oplus W'\rightarrow C$ is in ${\rm TFib}_\omega$, i.e., $(gf, t')$ is a deflation with ${\rm CoCone}(gf, t')\in\mathcal{Y}$. Observe that ${\tiny \begin{pmatrix}1&0\\0&1\\0&0\end{pmatrix}}: A\oplus W'\rightarrow A\oplus W'\oplus W$ is a splitting inflation with ${\rm Cone}{\tiny \begin{pmatrix}1&0\\0&1\\0&0\end{pmatrix}}=W\in\omega$.
Since $(gf, t')=(gf, t', gt){\tiny \begin{pmatrix}1&0\\0&1\\0&0\end{pmatrix}}$ is a deflation, where $(gf, t', gt): A\oplus W'\oplus W\rightarrow C$, it follows from the weakly idempotent completeness
that $(gf, t', gt)$ is a deflation. Therefore, there is a commutative diagram with $\mathbb{E}$-triangles
$$\xymatrix{{\rm CoCone}(gf, t')\ar[r]^i\ar[d]&A\oplus W'\ar[rr]^-{(gf, t')}\ar[d]_{\tiny \begin{pmatrix}1&0\\0&1\\0&0\end{pmatrix}}&&C\ar@{=}[d]\ar@{-->}[r]&\\
{\rm CoCone}(gf, t', gt)\ar[r]^{i'}\ar[d]&A\oplus W'\oplus W\ar[rr]^-{(gf, t', gt)}\ar[d]&&C\ar@{-->}[r]&\\
W\ar@{=}[r]\ar@{-->}[d]&W\ar@{-->}[d]&&\\
&&&}$$
by \cite[Proposition 3.17]{NP}. Since ${\rm CoCone}(gf, t')\in\mathcal{Y}$ and $W\in\mathcal{Y}$, which implies ${\rm CoCone}(gf, t', gt)\in\mathcal{Y}$, and hence $(gf, t', gt)\in{\rm TFib}_\omega$. Therefore, $(gf, gt)\in{\rm Weq}_\omega$ by the commutative diagram
$$\xymatrix{A\oplus W\ar[rr]^{(gf, gt)}\ar[rd]_{\tiny \begin{pmatrix}1&0\\0&0\\0&1\end{pmatrix}}&&C\\
&A\oplus W'\oplus W.\ar[ur]_{(gf, t', gt)}&}$$
\noindent This completes the proof.
\end{proof}

\begin{lem}\label{lem9} \ Assume that $\mathcal{B}=(\mathcal{B}, \mathbb{E}, \mathfrak{s})$ is a weakly idempotent complete extriangulated category.
Let $(\mathcal{X}, \mathcal{Y})$ be a hereditary cotorsion pair. Let $f: A\rightarrow B$ and $g:B\rightarrow C$ be morphisms in $\mathcal{B}$ with $gf\in{\rm Weq}_\omega$, $f\in{\rm CoFib}_\omega$ and $g\in{\rm TFib}_\omega$. Then $f\in{\rm Weq}_\omega$.
\end{lem}

\begin{proof} We first show that $f$ is a splitting inflation. Since $f\in{\rm CoFib}_\omega$, $f$ is an inflation with ${\rm Cone}(f)\in\mathcal{X}$. As $h:=gf\in{\rm Weq}_\omega$, there is a deflation $(h, t): A\oplus W\rightarrow C$ with $W\in\omega$ and ${\rm CoCone}(h,t)\in\mathcal{Y}$. Consider the commutative diagram with $\mathbb{E}$-triangles
$$\xymatrix{&A\ar[r]^f\ar[d]_{\binom{1}{0}}&B\ar[r]\ar[d]^g&{\rm Cone}(f)\ar@{-->}[r]&\\
{\rm CoCone}(h, t)\ar[r]&A\oplus W\ar[r]^-{(h,t)}&C\ar@{-->}[r]&.&}$$
Since $\mathbb{E}({\rm Cone}(f), {\rm CoCone}(h, t))=0$, there exists a morphism $\binom{s_1}{s_2}: B\rightarrow A\oplus W$ such that $1_A=s_1f$ by the Extension-Lifting Lemma \ref{lem3}. This shows that $f$ is a splitting inflation.

It remains to prove that ${\rm Cone}(f)\in\mathcal{Y}$. Since $f$ is a splitting inflation, one can write $h=gf$ as
$$\xymatrix@R=0.5cm{A\ar[rr]^{h}\ar[rd]_{f=\binom{1}{0}}&&C\\
&A\oplus X\ar[ur]_{g=(h, t')}&}$$
where $X:={\rm Cone}(f)\in\mathcal{X}$ and $g=(h, t')\in{\rm TFib}_\omega$. Since $(h, t')=(h, t, t'){\tiny \begin{pmatrix}1&0\\0&0\\0&1\end{pmatrix}}$ is a deflation, where $(h, t, t'): A\oplus W\oplus X\rightarrow C$,   it follows that $(h, t, t')$ is a deflation. Therefore, there is a commutative diagram with $\mathbb{E}$-triangles
$$\xymatrix{{\rm CoCone}(h, t')\ar[r]\ar[d]&A\oplus X\ar[rr]^{(h, t')}\ar[d]^{\tiny \begin{pmatrix}1&0\\0&0\\0&1\end{pmatrix}}&&C\ar@{=}[d]\ar@{-->}[r]&\\
{\rm CoCone}(h, t, t')\ar[r]^{\tiny \begin{pmatrix}k_1\\k_2\\k_3\end{pmatrix}}\ar[d]&A\oplus W\oplus X\ar[rr]^{\qquad (h, t, t')}\ar[d]&&C\ar@{-->}[r]&\\
W\ar@{=}[r]\ar@{-->}[d]&W\ar@{-->}[d]&&\\
&&&}$$
by \cite[Proposition 3.17]{NP}. Since ${\rm CoCone}(h, t')\in\mathcal{Y}$ and $W\in\omega$, we have ${\rm CoCone}(h, t, t')\in\mathcal{Y}$. The commutative square
 $$\xymatrix{{\rm CoCone}(h, t, t')\ar[r]^{\qquad k_3}\ar[d]_{\binom{k_1}{k_2}}&X\ar[d]^{-t'}\\
 A\oplus W\ar[r]^{(h, t)}&C}$$
is a homotopy cartesian in \cite[Definition 1.1]{HXZ} because there is an $\mathbb{E}$-triangle
 $$\xymatrix{{\rm CoCone}(h, t, t')\ar[r]^{\tiny \begin{pmatrix}k_1\\k_2\\k_3\end{pmatrix}}&A\oplus W\oplus X\ar[r]^{\qquad (h, t, t')}&C\ar@{-->}[r]&.}$$
Hence there is a commutative diagram of $\mathbb{E}$-triangles
$$\xymatrix{{\rm CoCone}(h,t)\ar@{=}[d]\ar[r]&{\rm CoCone}(h, t, t')\ar[r]^{\qquad k_3}\ar[d]_{\tiny \begin{pmatrix}k_1\\k_2\end{pmatrix}}&X\ar[d]^{-t'}\ar@{-->}[r]&\\
{\rm CoCone}(h,t)\ar[r]& A\oplus W\ar[r]^{(h, t)}&C\ar@{-->}[r]&}$$
by the dual of \cite[Lemma 3.1(2)]{HXZ}. Since ${\rm CoCone}(h,t)\in\mathcal{Y}$ and ${\rm CoCone}(h,t, t')\in\mathcal{Y}$, we have $X\in\mathcal{Y}$ as $\mathcal{Y}$ is closed under cones of inflations. This completes the proof.
\end{proof}
\begin{lem}\label{lem10} \ Assume that $\mathcal{B}=(\mathcal{B}, \mathbb{E}, \mathfrak{s})$ is a weakly idempotent complete extriangulated category.
Let $(\mathcal{X}, \mathcal{Y})$ be a hereditary cotorsion pair, such that $\omega=\mathcal{X}\cap \mathcal{Y}$ is contravariantly finite in $\mathcal{B}$. Let $f: A\rightarrow B$ and $g:B\rightarrow C$ be morphisms in $\mathcal{B}$ with $gf\in{\rm Weq}_\omega$ and $g\in{\rm Weq}_\omega$. Then $f\in{\rm Weq}_\omega$.
\end{lem}
\begin{proof} The proof is similar to that of \cite[Lemma 3.10]{CLZ} (this needs the factorization axiom, and hence needs the condition that $\omega=\mathcal{X}\cap \mathcal{Y}$ is contravariantly finite in $\mathcal{B}$),  by noting that Lemma \ref{lem9} holds.\end{proof}

Now, Lemmas \ref{lem5}, \ref{lem8} and \ref{lem10} yield the following \textbf{Two out of three axiom}.

\begin{prop}\label{prop4} \ Assume that $\mathcal{B}=(\mathcal{B}, \mathbb{E}, \mathfrak{s})$ is a weakly idempotent complete extriangulated category. Let $(\mathcal{X}, \mathcal{Y})$ be a hereditary complete cotorsion pair,
such that $\omega=\mathcal{X}\cap \mathcal{Y}$ is contravariantly finite in $\mathcal{B}$. Let $f: A\rightarrow B$ and $g:B\rightarrow C$ be morphisms in $\mathcal{B}$. If two of three morphisms $f, g, gf$ are in ${\rm Weq}_\omega$, then so does the third.
\end{prop}

In the following, we prove that ${\rm CoFib}_\omega, {\rm Fib}_\omega$ and ${\rm Weq}_\omega$ are closed under retracts. Suppose that $g: A\rightarrow B$ is a retract of $f: C\rightarrow D$, i.e., one has a commutative diagram of morphisms
$$\xymatrix@R=0.5cm{A\ar[r]^{a}\ar[d]_g&C\ar[r]^{c}\ar[d]^f&A\ar[d]^g\\
B\ar[r]^{b}&D\ar[r]^{d}&B}$$
with $ca=1_{A}$ and $db=1_{B}$.

\begin{lem}\label{lem12}  \ Let $\mathcal{B}=(\mathcal{B}, \mathbb{E},\mathfrak{s})$ be a weakly idempotent complete extriangulated category, let $\mathcal{X}$ and $\mathcal{Y}$ be full additive subcategories of $\mathcal{B}$
which are closed under direct summands and isomorphisms. Then ${\rm CoFib}_\omega$, ${\rm TCoFib}_\omega$, and ${\rm TFib}_\omega$ are  closed under retracts.\end{lem}

\begin{proof} Let $f\in{\rm CoFib}_\omega$, i.e., $f$ is an inflation with ${\rm Cone}(f)\in\mathcal{X}$. Since $bg=fa$ is an inflation, $g$ is an inflation. It follows from (ET3) that there exist morphisms $m$ and $n$ which make the following diagram commutative
$$\xymatrix@R=0.5cm{A\ar[r]^{a}\ar[d]_g&C\ar[r]^{c}\ar[d]^f&A\ar[d]^g\\
B\ar[r]^{b}\ar[d]_{p}&D\ar[r]^{d}\ar[d]^{q}&B\ar[d]^{p}\\
{\rm Cone}(g)\ar@{-->}[r]^{m}\ar@{-->}[d]&{\rm Cone}(f)\ar@{-->}[r]^{n}\ar@{-->}[d]&{\rm Cone}(g),\ar@{-->}[d]&\\&&}$$
where all columns are $\mathbb{E}$-triangles. Hence there is a commutative diagram with $\mathbb{E}$-triangles
$$\xymatrix@R=0.5cm{A\ar[r]^-g\ar@{=}[d]&B\ar[r]^-{p}\ar@{=}[d]&{\rm Cone}(g)\ar[d]^{nm}\ar@{-->}[r]&\\
A\ar[r]^g&B\ar[r]^-{p}&{\rm Cone}(g)\ar@{-->}[r]&}$$
Thus $nm$ is an isomorphism, which implies that ${\rm Cone}(g)$ is a direct summand of ${\rm Cone}(f)$. Hence ${\rm Cone}(g)\in\mathcal{X}$ and $g\in{\rm CoFib}_\omega$.

 Similarly for the other cases. \end{proof}

\begin{lem}\label{lem13}  \ Let $\mathcal{B}=(\mathcal{B}, \mathbb{E},\mathfrak{s})$ be an extriangulated category, let $\mathcal{X}$ and $\mathcal{Y}$ be full additive subcategories of $\mathcal{B}$
which are closed under direct summands and isomorphisms. Then ${\rm Fib}_\omega$  is closed under retracts.\end{lem}

\begin{proof} The proof is similar to  that of \cite[Step 2, p.18]{CLZ}.
\end{proof}

\begin{lem}\label{lem14} \ Assume that $\mathcal{B}=(\mathcal{B}, \mathbb{E}, \mathfrak{s})$ is a weakly idempotent complete extriangulated category. Let $(\mathcal{X}, \mathcal{Y})$ be a hereditary complete cotorsion pair,
such that $\omega=\mathcal{X}\cap \mathcal{Y}$ is contravariantly finite in $\mathcal{B}$. Then ${\rm Weq}_\omega$  is closed under retracts.\end{lem}

\begin{proof} Let $g=g_1\oplus g_2: A_1\oplus A_2\rightarrow B_1\oplus B_2$ and suppose $g\in{\rm Weq}_\omega$. Then $g_n$ admits a factorization  $g_n=p_ni_n$, $i_n\in{\rm CoFib}_\omega$, $p_n\in{\rm TFib}_\omega$ for $n=1, 2$ by Proposition \ref{pro3}. Therefore, $g=g_1\oplus g_2=(p_1\oplus p_2)\circ (i_1\oplus i_2)$. Since $p_1, p_2\in{\rm TFib}_\omega$, one has $p_1\oplus p_2\in{\rm TFib}_\omega\subseteq{\rm Weq}_\omega$. So we have $i_1\oplus i_2\in{\rm Weq}_\omega$ by Lemma \ref{lem10}. Since $i_1, i_2\in{\rm CoFib}_\omega$, we have $i_1\oplus i_2\in{\rm CoFib}_\omega$. Therefore, $i_1\oplus i_2\in{\rm CoFib}_\omega\cap{\rm Weq}_\omega={\rm TCoFib}_\omega$ by Proposition \ref{pro1}, which implies that $i_1, i_2\in{\rm TCoFib}_\omega$ by Lemma \ref{lem12}. Therefore, we have $g_1, g_2\in{\rm Weq}_\omega$, which implies that ${\rm Weq}_\omega$  is closed under retracts.
\end{proof}

{\bf Proof of the Retract axiom}. Now, the Retract axiom follows from Lemmas \ref{lem12}-\ref{lem14}.  \hfill$\Box$

\vspace{2mm}
Finally, we have the following \textbf{Lifting axiom}. Its proof is similar to that of Lifting axiom in \cite[Subsection 3.5]{CLZ}.

\begin{prop}\label{pro5} \ Assume that $\mathcal{B}=(\mathcal{B}, \mathbb{E}, \mathfrak{s})$ is a weakly idempotent complete extriangulated category. Let $\mathcal{X}, \mathcal{Y}$ be full additive subcategories of $\mathcal{B}$ which are closed under direct summands and isomorphisms, and $\omega=\mathcal{X}\cap \mathcal{Y}$. If $\mathbb{E}(\mathcal{X}, \mathcal{Y})=0$, then

\vspace{1mm}

{\rm (1)} \ ${\rm TCoFib}_\omega$ satisfies the left lifting property with respect to ${\rm Fib}_\omega$.
\vspace{1mm}

{\rm (2)} \ ${\rm TFib}_\omega$ satisfies the left lifting property with respect to ${\rm CoFib}_\omega$.
\end{prop}

\textbf{Up to now the ``if" part of Theorem \ref{thmA} is proved.}

\subsection{Hereditary complete cotorsion pair arising from a model structure}

The aim of this subsection is to prove the ``only if" part of Theorem \ref{thmA} as follows.

\begin{thm}\label{thmC} Let $\mathcal{B}$ be a weakly idempotent complete extriangulated category, let $\mathcal{X}, \mathcal{Y}$ be full additive subcategories of $\mathcal{B}$ which are closed under direct summands and isomorphisms, and let $\omega=\mathcal{X}\cap \mathcal{Y}$. If $({\rm CoFib}_\omega, {\rm Fib}_\omega, {\rm Weq}_\omega)$ is a model structure on $\mathcal{B}$, then
$(\mathcal{X, Y})$ is a hereditary complete cotorsion pair on $\mathcal{B}$, and $\omega$ is contravariantly finite in $\mathcal{B}$. Moreover, the class $\mathcal{C}_\omega$ of cofibrant objects is $\mathcal{X}$, the class $\mathcal{F}_\omega$ of fibrant objects is $\mathcal{B}$, the class $\mathcal{W}_\omega$ of trivial objects is $\mathcal{Y}$; and the homotopy category ${\rm Ho}(\mathcal{B})$ is equivalent to the additive quotient $\mathcal{X}/\omega$.
\end{thm}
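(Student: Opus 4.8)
The plan is to read off the three classes of distinguished objects directly from the definitions, then to verify the two cotorsion-pair axioms and the contravariant finiteness of $\omega$, to upgrade to a hereditary cotorsion pair, and finally to identify the homotopy category. The one delicate point is the vanishing $\mathbb{E}(\mathcal{X},\mathcal{Y})=0$: this cannot simply be quoted from Proposition \ref{pro1}, which presupposes exactly that vanishing, so it must be bootstrapped through $\omega$.

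First, the objects. Since $0\to X$ is always an inflation with ${\rm Cone}(0\to X)\cong X$, one has $0\to X\in{\rm CoFib}_\omega$ iff $X\in\mathcal{X}$; as identities are cofibrations in any model structure, $0\to 0\in{\rm CoFib}_\omega$, so $0\in\mathcal{X}$ and the cofibrant objects are exactly $\mathcal{X}$. Every $X\to 0$ is vacuously $\omega$-epic, so every object is fibrant, $\mathcal{F}_\omega=\mathcal{C}$. Unwinding the definition, $X\to 0\in{\rm Weq}_\omega$ means $X\oplus W\in\mathcal{Y}$ for some $W\in\omega$, equivalently $X\in\mathcal{Y}$ by closure under summands; two-out-of-three applied to $0\to X\to 0$ shows this is the same as $0\to X\in{\rm Weq}_\omega$, so the trivial objects are $\mathcal{Y}$, $0\in\mathcal{Y}$, hence $0\in\omega$, and the trivially cofibrant objects are $\mathcal{X}\cap\mathcal{Y}=\omega$.

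For axiom (b) and heredity I would first note that $\mathcal{Y}$ is closed under cones of inflations and under extensions: in a conflation $Y_1\to Y_2\to C$ with $Y_1,Y_2\in\mathcal{Y}$, the deflation $Y_2\to C$ has cocone $Y_1\in\mathcal{Y}$, so it lies in ${\rm TFib}_\omega\subseteq{\rm Weq}_\omega$, and composing with $0\to Y_2\in{\rm Weq}_\omega$ gives $0\to C\in{\rm Weq}_\omega$ by two-out-of-three, so $C\in\mathcal{Y}$; the extension case is handled symmetrically using $0\to E\to Y_2$. Axiom (b): factoring $C\to 0$ as $C\xrightarrow{j}Z\xrightarrow{q}0$ with $j\in{\rm CoFib}_\omega$ and $q$ a trivial fibration forces $Z\in\mathcal{Y}$ and ${\rm Cone}(j)\in\mathcal{X}$, giving the $\mathbb{E}$-triangle $C\to Z\to X^{C}$; factoring $0\to C$ as $0\xrightarrow{j'}X_C\xrightarrow{q'}C$ with $j'\in{\rm CoFib}_\omega$ and $q'$ a trivial fibration forces $X_C\in\mathcal{X}$, and writing $q'$ as a split deflation (coming from a $W\in\omega$ that witnesses $q'\in{\rm Weq}_\omega$ together with the $\omega$-epicness of $q'$) followed by a deflation, then invoking Condition (WIC) and Lemma \ref{lem1}, shows ${\rm CoCone}(q')$ sits in a conflation with the other two terms in $\mathcal{Y}$, so ${\rm CoCone}(q')\in\mathcal{Y}$, giving the $\mathbb{E}$-triangle $Y_C\to X_C\to C$. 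Contravariant finiteness of $\omega$ comes from the other factorization of $0\to C$: writing it as $0\to A\xrightarrow{p}C$ with $0\to A$ a trivial cofibration and $p\in{\rm Fib}_\omega$ makes $A\in\omega$ and $p$ an $\omega$-epimorphism, i.e.\ a right $\omega$-approximation.

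The hard part is axiom (a), $\mathbb{E}(\mathcal{X},\mathcal{Y})=0$. I would first prove $\mathbb{E}(\omega,\mathcal{Y})=0$: for $W_0\in\omega$, $Y\in\mathcal{Y}$ and a conflation $Y\xrightarrow{c}E\to W_0$, the object $E$ is an extension of the trivial objects $Y,W_0$, hence trivial, so $c\in{\rm CoFib}_\omega$ (its cone $W_0$ lies in $\mathcal{X}$) and $c\in{\rm Weq}_\omega$ (two-out-of-three on $0\to Y\xrightarrow{c}E$, both $Y$ and $E$ trivial), i.e.\ $c$ is a trivial cofibration; lifting $c$ against the fibration $E\to 0$ produces a retraction of $c$, so the conflation splits. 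Then for arbitrary $X\in\mathcal{X}$, $Y\in\mathcal{Y}$ and a conflation $Y\to E\xrightarrow{p}X$, the long exact sequence of the $\mathbb{E}$-triangle together with $\mathbb{E}(\omega,\mathcal{Y})=0$ shows $p$ is $\omega$-epic, so $p\in{\rm Fib}_\omega$, while $p\in{\rm Weq}_\omega$ is witnessed by $p\colon E\oplus 0\to X$; thus $p$ is a trivial fibration, and lifting $0\to X$ against $p$ gives a section of $p$, so the conflation splits. Hence $(\mathcal{X},\mathcal{Y})$ is a cotorsion pair, hereditary by Proposition \ref{pro0} (so Proposition \ref{pro1} now applies). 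Finally, ${\rm Ho}(\mathcal{C})$ is the category of cofibrant–fibrant objects, namely $\mathcal{X}$, modulo the homotopy relation $\sim$; using a very good cylinder object together with ${\rm TFib}_\omega={\rm Fib}_\omega\cap{\rm Weq}_\omega$ one sees that $f\sim g$ implies $f-g$ factors through an object of $\mathcal{Y}$, while conversely every morphism from a cofibrant object into a trivial object is null-homotopic, so any morphism factoring through $\mathcal{Y}$ is $\sim 0$; since morphisms from $\mathcal{X}$ into $\mathcal{Y}$ factor through $\omega$, the homotopy relation on $\mathcal{X}$ is precisely "differing by a morphism through $\omega$", and ${\rm Ho}(\mathcal{C})=\mathcal{X}/\mathcal{W}$.
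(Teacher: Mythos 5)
Your proposal is correct and follows the same overall skeleton as the paper's proof (identify the cofibrant/fibrant/trivial objects, verify the cotorsion-pair axioms, deduce heredity via Proposition \ref{pro0}, extract contravariant finiteness from a factorization of $0\rightarrow C$, and identify ${\rm Ho}(\mathcal{C})$ via the homotopy relation ``differ by a map through $\omega$''), but it diverges at the most delicate step and is in places more self-contained. For the vanishing $\mathbb{E}(\mathcal{X},\mathcal{Y})=0$ the paper realizes $\delta$ as $Y\rightarrow A\xrightarrow{y}X$, asserts that $y$ is a trivial fibration, and quotes a lemma of Cui--Lu--Zhang to factor $1_X$ through $y$; this tacitly uses ${\rm TFib}_\omega\subseteq{\rm Fib}_\omega$, which itself rests on $\mathbb{E}(\omega,\mathcal{Y})=0$. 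Your two-stage bootstrap --- first splitting conflations $Y\rightarrow E\rightarrow W_0$ with $W_0\in\omega$ by lifting the trivial cofibration $c$ against $E\rightarrow 0$, then using the resulting $\omega$-epicness of $E\rightarrow X$ to make it an honest trivial fibration and lifting $0\rightarrow X$ against it --- removes this circularity and is the more careful argument. You also obtain contravariant finiteness in one step (factor $0\rightarrow C$ as a trivial cofibration followed by a fibration, so the middle object lies in $\mathcal{C}_\omega\cap\mathcal{W}_\omega=\omega$ and the fibration is already a right $\omega$-approximation), where the paper composes two factorizations and invokes the lifting axiom; and you prove closure of $\mathcal{Y}$ under cones of inflations and extensions by a direct two-out-of-three argument rather than citing \cite{CLZ}. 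The only place where you, like the paper, lean on external machinery is the identification of the homotopy relation (Lemma \ref{lem3.2.1} in the paper); your sketch via a very good cylinder object and the factorization of $\mathcal{X}$--$\mathcal{Y}$ morphisms through $\omega$ is the standard route and is consistent with what the paper imports. In short: same destination, same map, but your detour through $\mathbb{E}(\omega,\mathcal{Y})=0$ buys a proof that stands on its own.
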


To prove Theorem \ref{thmC}, we need some preparations.

\begin{lem}\label{lem3.2.1} Assume that $(\mathcal{X}, \mathcal{Y})$ is a complete cotorsion pair on an extriangulated category $\mathcal{B}$ with $\omega=\mathcal{X}\cap\mathcal{Y}$. If  $({\rm CoFib}_\omega, {\rm Fib}_\omega, {\rm Weq}_\omega)$ is a model structure as given in {\rm (3.1)}, then the homotopy category ${\rm Ho}(\mathcal{B})$ is equivalent to the additive quotient $\mathcal{X}/\omega$.
\end{lem}
\begin{proof} First, we need to apply Theorem \ref{thm} to show that ${\rm Ho}(\mathcal{B})$ is equivalent to the quotient category $\pi\mathcal{B}_{cf}$. For this purpose, we need to verify the five conditions in Condition \ref{condition}. It is clear that conditions (i) and (ii) hold as $\mathcal{B}$ is an additive category. It remains to prove conditions (iv) and (v) (we prefer to include a short justification, although it can be omitted by \cite[Theorem 3.2]{Egger}).

 For any trivial cofibration $i: A\rightarrow B$ and any morphism $u: A\rightarrow C$,  there exists a splitting  $\mathbb{E}$-triangle $\xymatrix{A\ar[r]^u&B\ar[r]&W\ar@{-->}[r]^\delta&}$ such that $W\in\omega$ by definition of trivial cofibrations. Hence there is a commutative diagram
$$\xymatrix@R=0.5cm{A\ar[r]^i\ar[d]_u&B\ar[d]\ar[r]
 &W\ar@{-->}[r]^\delta\ar@{=}[d]&\\C\ar[r]^{i'}&D\ar[r]&W\ar@{-->}[r]^{u_*\delta}&}$$
such that the left square is a weak push-out. Since $\delta=0$, one has $u_*\delta=0$. So the second row is a splitting $\mathbb{E}$-triangle with $W\in\omega$, that is, $i'$ is a trivial cofibration. This shows that condition (iv) holds.

 For any trivial fibration $p: C\rightarrow D$ and any morphism $u: B\rightarrow D$,  there exists an  $\mathbb{E}$-triangle $\xymatrix{Y\ar[r]&C\ar[r]^p&D\ar@{-->}[r]^\delta&}$ such that $Y\in\mathcal{Y}$ by definition of trivial fibrations. Hence there is a commutative diagram
$$\xymatrix@R=0.5cm{Y\ar[r]\ar@{=}[d]&A\ar[d]
\ar[r]^{p'}&B\ar@{-->}[r]^{u^*\delta}\ar[d]^u&\\Y\ar[r]&C\ar[r]^p&D\ar@{-->}[r]^{\delta}&}
$$
such that the right square is a weak pull-back.  Hence $p'$ is a trivial fibration by definition. This shows that condition (v) holds.

Next, we will prove the equality $\pi\mathcal{B}_{cf}=\mathcal{X}/\omega$. For the model structure $({\rm CoFib}_\omega, {\rm Fib}_\omega, {\rm Weq}_\omega)$, it is clear that $\mathcal{B}_{cf}=\mathcal{X}$. Let $f, g: A\rightarrow B$ be morphisms with $A, B\in\mathcal{X}$. It is suffices to show that $f\overset{l}{\sim}g\Leftrightarrow f-g$ factors through an object in $\omega$.

If $f\overset{l}{\sim} g$, then by the definition of the left homotopy relation $\overset{l}{\sim}$, one has a commutative diagram
\begin{center}$\xymatrix@R=0.5cm@C=3em{A\oplus A\ar[d]_{(1,1)}\ar[rr]^{(f, g)}\ar[rrd]^{(\partial_1, \partial_2)}&&B\\
A&&\widetilde{A}\ar[ll]_s\ar[u]_h}$
\end{center}
with $s\in{\rm Weq}_\omega$. By definition, there is a deflation $(s, t): \widetilde{A}\oplus W\overset{(s, t)}{\longrightarrow} A$ with $W\in\omega$ and ${\rm CoCone}(s, t)\in\mathcal{Y}$. Hence $(s, t)\in{\rm TFib}_\omega$ and there is a commutative diagram
\begin{center}$\xymatrix@R=0.5cm@=3em{A\oplus A\ar[d]_{(1,1)}\ar[rr]^{(f, g)}\ar[rrd]^{\tiny\begin{pmatrix}\partial_1&\partial_2\\0&0\end{pmatrix}}&&B\\
A&&\widetilde{A}\oplus W\ar[ll]_{(s,t)}\ar[u]_{(h,0)}}$
\end{center}
Therefore, $f-g=(h, 0){\tiny\begin{pmatrix}\partial_1-\partial_2\\0\end{pmatrix}}$ and $(s, t){\tiny\begin{pmatrix}\partial_1-\partial_2\\0\end{pmatrix}}=0$. We claim that ${\tiny\begin{pmatrix}\partial_1-\partial_2\\0\end{pmatrix}}$ factors through an object in $\omega$.
Since $(\mathcal{X}, \mathcal{Y})$ is a complete cotorsion pair, there is an $\mathbb{E}$-triangle $\xymatrix{A\ar[r]^i&I\ar[r]&X\ar@{-->}[r]&}$ with $I\in\mathcal{Y}$ and $X\in\mathcal{X}$. Then $i\in{\rm CoFib}_\omega$. Since $\mathcal{X}$ is closed under extensions, one has $I\in\mathcal{X}\cap\mathcal{Y}=\omega$. Consider a commutative diagram
\begin{center}$\xymatrix@R=0.5cm@C = 1.2cm{A\ar[r]^{\tiny\begin{pmatrix}\partial_1-\partial_2\\0\end{pmatrix}\ \ }\ar[d]_i&\widetilde{A}\oplus W\ar[d]^{(s, t)}\\I\ar@{-->}[ur]\ar[r]^0&A}$\end{center}
By the lifting axiom, ${\tiny\begin{pmatrix}\partial_1-\partial_2\\0\end{pmatrix}}$ factors through an object $I$ in $\omega$.

Conversely, if $f-g$ factors through an object $W$ in $\omega$, say $f-g=v\circ u$ with $A\overset{u}{\longrightarrow}W\overset{v}{\longrightarrow}B$, then there is a commutative diagram
\begin{center}$\xymatrix@=3em{A\oplus A\ar[d]_{(1,1)}\ar[rr]^{(f, g)}\ar[rrd]^{\tiny\begin{pmatrix}1&1\\u&0\end{pmatrix}}&&B\\
A&&A\oplus W\ar[ll]_{(1, 0)}\ar[u]_{(g, v)}}$
\end{center}
where $(1, 0)\in{\rm TFib}_\omega \subseteq {\rm Weq}_\omega$. Thus $f\overset{l}{\sim} g$. This completes the proof.
\end{proof}

{\bf Proof of Theorem \ref{thmC}.} By definition one easily sees that $\mathcal{C}_\omega=\mathcal{X}$ and $\mathcal{F}_\omega=\mathcal{B}$. We claim that $\mathcal{W}_\omega=\mathcal{Y}$. If $Y\in\mathcal{Y}$, since $(0, 0): Y\oplus 0\rightarrow 0$ is a deflation with $0\in\omega$ and ${\rm CoCone}(0,0)=Y\in\mathcal{Y}$, hence $0: Y\rightarrow 0$ is a weak equivalence by definition, i.e., $Y\in\mathcal{W}_\omega$. Conversely, if $Y\in\mathcal{W}_\omega$, then $0: Y\rightarrow 0$ is a weak equivalence. Hence there is a deflation $(0, 0): Y\oplus W\rightarrow 0$ with ${\rm CoCone}(0, 0)=Y\oplus W\in \mathcal{Y}$, which implies $Y\in \mathcal{Y}$. So we have $\mathcal{W}_\omega=\mathcal{Y}$. Thus ${\rm T}\mathcal{C}_\omega=\mathcal{C}_\omega\cap \mathcal{Y}=\omega$ and ${\rm T}\mathcal{F}_\omega=\mathcal{F}_\omega\cap\mathcal{W}_\omega=\mathcal{Y}$.

Next we prove that $(\mathcal{X, Y})$ is a hereditary complete cotorsion pair. Let $\delta\in\mathbb{E}(X, Y)$ with $X\in\mathcal{X}$ and $Y\in\mathcal{Y}$. Realize it as an $\mathbb{E}$-triangle $\xymatrix{Y\ar[r]^x&A\ar[r]^y&X\ar@{-->}[r]^\delta&.}$ Since $Y$ is in $\mathcal{Y}$, it follows that $y$ is a trivial fibration. Note that $X\in\mathcal{X}=\mathcal{C}_\omega$. Then the morphism $1_X: X\rightarrow X$ factors through $y$ by \cite[Lemma 4.2]{CLZ}. Thus $\delta$ splits by \cite[Corollary 3.5]{NP}, and so $\mathbb{E}(X, Y)=0$.

Let $C\in\mathcal{B}$ be any object, and let $\xymatrix@=2em{0\ar[r]&X_C\ar[r]^{f}&C}$ be a factorization of  $0: 0\rightarrow C$ into a cofibration followed by a trivial fibration. Then $X_C\in \mathcal{C}_\omega=\mathcal{X}$ by definition.
Since $f$ is a trivial fibration, there exists an $\mathbb{E}$-triangle $\xymatrix{Y_C\ar[r]&X_C\ar[r]^{f}&C\ar@{-->}[r]&}$ with $Y_C\in\mathcal{Y}$. Dually, there exists an $\mathbb{E}$-triangle $\xymatrix{C\ar[r]&Y^C\ar[r]&X^C\ar@{-->}[r]&}$ with $X^C\in\mathcal{X}$ and $Y^C\in\mathcal{Y}$. Hence $(\mathcal{X, Y})$ is a complete cotorsion pair by  Fact \ref{fact2.18}.

Thanks to Lemma \ref{lem3.2.1}, we have  ${\rm Ho}(\mathcal{B})\cong\mathcal{X}/\omega$.

We can prove that $\mathcal{Y}$ is closed under cones of inflations by a similar argument of \cite[Proposition 1.2]{CLZ}, hence  $(\mathcal{X, Y})$ is a hereditary complete cotorsion pair by  Proposition \ref{pro0}.

Finally, we claim that $\omega$ is contravariantly finite. For any object $C\in\mathcal{B}$, there exists an $\mathbb{E}$-triangle $\xymatrix{Y_C\ar[r]&X_C\ar[r]^{f}&C\ar@{-->}[r]&}$ with $X_C\in\mathcal{X}$ and $Y_C\in\mathcal{Y}$. Hence $f$ is a trivial fibration by definition. Let $\xymatrix@=2em{0\ar[r]&F_C\ar[r]^{g}&X_C}$ be a factorization of $0: 0\rightarrow X_C$ into a trivial cofibration followed by a fibration. Then $F_C\in{\rm T}\mathcal{C}_\omega=\omega$. We claim that $fg: F_C\rightarrow C$ is a right $\omega$-approximation of $C$. Since fibrations are closed under compositions, $fg$ is a fibration. Let $t: W\rightarrow C$ be a morphism with $W\in \omega$. Consider the following diagram
$$\xymatrix{0\ar[r]\ar[d]&F_C\ar[d]^{fg}\\
W\ar[r]^t\ar@{-->}[ur]^s&C}$$
Since $0\rightarrow W$ is a trivial cofibration, by the lifting axiom, $t$ factors through $fg$. Hence $f g: F_C\rightarrow C$ is a right $\omega$-approximation of $C$. This completes the proof. \hfill$\Box$

\section{The Beligiannis-Reiten correspondence}

This section aims to give the Beligiannis-Reiten correspondence, a bijection between weakly projective model structures on a weakly idempotent complete extriangulated category $\mathcal{B}$ and hereditary complete cotorsion pairs such that the heart is contravariantly finite in $\mathcal{B}$.

\subsection{\bf Weakly projective model structures}

Let $({\rm CoFib}, {\rm Fib}, {\rm Weq})$ be a model structure on a pointed category $\mathcal{A}$ (i.e., a category with zero object). Put
\begin{align*} & \mathcal C: = \{\mbox{cofibrant objects}\}, \ \ \ \mathcal F: = \{\mbox{fibrant objects}\},
\ \ \ \mathcal W: = \{\mbox{trivial objects}\} \\
& {\rm T}\mathcal C: = \{\mbox{trivially cofibrant objects}\} , \ \ \ {\rm T}\mathcal F: = \{\mbox{trivially fibrant objects}\}.\end{align*}

By the Lifting axiom one can get the following fact (\cite[VIII, 1.1]{Beli-Reiten}).

\begin{lem}\label{lem4.1}Let $({\rm CoFib}, {\rm Fib}, {\rm Weq})$ be a model structure on pointed category $\mathcal{A}$. Then

$(1)$ If $p: B\rightarrow C$ is a trivial fibration $($respectively, a fibration$)$, then any morphism $h: X\rightarrow C$ factors through $p$, where $X\in\mathcal{C}$ $($respectively, $X\in{\rm T}\mathcal{C}$$)$.

$(2)$ If $i: A\rightarrow B$ is a trivial cofibration $($respectively, a cofibration$)$, then any morphism $g: A\rightarrow Y$ factors through $i$, where $Y\in\mathcal{F}$ $($respectively, $X\in{\rm T}\mathcal{F}$$)$.
\end{lem}

By Lemma \ref{lem4.1} one has the following lemma (\cite[VIII, 2.1]{Beli-Reiten}).

\begin{lem}\label{lem4.2}Let $({\rm CoFib}, {\rm Fib}, {\rm Weq})$ be a model structure on a pointed category $\mathcal{A}$. Then

$(1)$ The full subcategory $\mathcal{C}$ $($respectively, ${\rm T}\mathcal{C})$ a contravariantly finite in $\mathcal{A}$. Explicitly, for any object $A$ of $\mathcal{A}$, there is a right $\mathcal{C}$-approximation $($respectively, a right ${\rm T}\mathcal{C}$-approximation$)$ $f: C\rightarrow A$ with $f\in{\rm TFib}~($respectively, $f\in{\rm Fib})$.

$(2)$ The full subcategory $\mathcal{F}$ $($respectively, ${\rm T}\mathcal{F})$ a covariantly finite in $\mathcal{A}$. Explicitly, for any object $A$ of $\mathcal{A}$, there is a left $\mathcal{F}$-approximation $($respectively, a left ${\rm T}\mathcal{F}$-approximation$)$ $g: A\rightarrow F$ with $g\in{\rm TCoFib}~($respectively, $g\in{\rm CoFib})$.
\end{lem}

The following result is \cite[VIII, 3.2]{Beli-Reiten} in an abelian category and \cite[Lemma 4.4]{CLZ} in an exact category, with a slight difference.

\begin{lem}\label{lem4.3}Let $({\rm CoFib}, {\rm Fib}, {\rm Weq})$ be a model structure on an extriangulated category $\mathcal{B}$.

$(1)$ If $\{f\mid f$ is an inflation with ${\rm Cone}(f)\in\mathcal{C}\}\subseteq {\rm CoFib}$, then $\mathbb{E}(\mathcal{C}, {\rm T}\mathcal{F})=0$.

$(2)$ If ${\rm TFib}\subseteq \{f\mid f$ is a deflation with ${\rm CoCone}(f)\in{\rm T}\mathcal{F}\}$, then $^\perp{\rm T}\mathcal{F}\subseteq \mathcal{C}$.

$(3)$ If ${\rm CoFib}\subseteq \{f\mid f$ is an inflation with ${\rm Cone}(f)\in\mathcal{C}\}$, then $\mathcal{C}^\perp\subseteq{\rm T}\mathcal{F}$.

$(1')$ If $\{f\mid f$ is a deflation with ${\rm CoCone}(f)\in\mathcal{F}\}\subseteq {\rm Fib}$, then $\mathbb{E}({\rm T}\mathcal{C},\mathcal{F})=0$.

$(2')$ If ${\rm TCoFib}\subseteq \{f\mid f$ is an inflation with ${\rm Cone}(f)\in{\rm T}\mathcal{C}\}$, then ${\rm T}\mathcal{C}^\perp\subseteq \mathcal{F}$.

$(3')$ If ${\rm Fib}\subseteq \{f\mid f$ is a deflation with ${\rm CoCone}(f)\in\mathcal{F}\}$, then $^\perp\mathcal{F}\subseteq{\rm T}\mathcal{C}$.
\end{lem}

\begin{proof} By duality, it is sufficient to prove $(1')$-$(3')$.

$(1')$ For any $\mathbb{E}$-triangle $\xymatrix{A\ar[r]^i&B\ar[r]^d&C\ar@{-->}[r]&}$ with $A\in\mathcal{F}$ and $C\in{\rm T}\mathcal{C}$, then $d$ is a fibration by assumption. Thus ${\rm 1}_C: C\rightarrow C$ factors through $d$ by Lemma \ref{lem4.1}(1), i.e., $d$ is a splitting deflation, hence $\mathbb{E}({\rm T}\mathcal{C},\mathcal{F})=0$.

$(2')$ For any object $A\in{\rm T}\mathcal{C}^\perp$, there is a left $\mathcal{F}$-approximation $g: A\rightarrow F$ with $g\in{\rm TCoFib}$ by Lemma \ref{lem4.2}(2). Hence there is an $\mathbb{E}$-triangle $\xymatrix{A\ar[r]^g&F\ar[r]&X\ar@{-->}[r]&}$ with $X\in{\rm T}\mathcal{C}$ by assumption. Since $A\in{\rm T}\mathcal{C}^\perp$, this $\mathbb{E}$-triangle splits. Hence $A$ is a direct summand of $F$, and thus $A\in\mathcal{F}$.

$(3')$ The proof is similar to that of $(2')$.
\end{proof}

\begin{prop}\label{prop4.4} Let $({\rm CoFib}, {\rm Fib}, {\rm Weq})$ be a model structure on an extriangulated category $\mathcal{B}$. Then the following are equivalent:

$(1)$ ${\rm CoFib}=\{f\mid f$ is an inflation with ${\rm Cone}(f)\in\mathcal{C}\}$, and
\begin{center}
  ${\rm TFib}\subseteq \{f\mid f$ is a deflation with ${\rm CoCone}(f)\in{\rm T}\mathcal{F}\}$.
\end{center}

$(2)$ $\mathbb{E}(\mathcal{C}, {\rm T}\mathcal{F})=0, {\rm CoFib}\subseteq\{f\mid f$ is an inflation with ${\rm Cone}(f)\in\mathcal{C}\}$, and
\begin{center}
${\rm TFib}\subseteq \{f\mid f$ is a deflation with ${\rm CoCone}(f)\in{\rm T}\mathcal{F}\}$.
\end{center}

$(3)$ ${\rm CoFib}\subseteq\{f\mid f$ is an inflation with ${\rm Cone}(f)\in\mathcal{C}\}$, and
\begin{center}
${\rm TFib}= \{f\mid f$ is a deflation with ${\rm CoCone}(f)\in{\rm T}\mathcal{F}\}$.
\end{center}

$(4)$ ${\rm CoFib}=\{f\mid f$ is an inflation with ${\rm Cone}(f)\in\mathcal{C}\}$, and
\begin{center}
${\rm TFib}=\{f\mid f$ is a deflation with ${\rm CoCone}(f)\in{\rm T}\mathcal{F}\}$.
\end{center}

$(5)$ $(\mathcal{C}, {\rm T}\mathcal{F})$ is a complete cotorsion pair, ${\rm CoFib}\subseteq \{f\mid f$ is an inflation with ${\rm Cone}(f)\in\mathcal{C}\}$, and ${\rm TFib}\subseteq \{f\mid f$ is a deflation with ${\rm CoCone}(f)\in{\rm T}\mathcal{F}\}$.
\end{prop}
\begin{proof} $(1)\Rightarrow (2)$. This follows from Lemma \ref{lem4.3}(1).

$(2)\Rightarrow (1)$. Let $i: A\rightarrow B$ be an inflation with ${\rm Cone}(f)\in\mathcal{C}$. For any $p\in{\rm TFib}$, then $p$ is a deflation with ${\rm CoCone}(p)\in{\rm T}\mathcal{F}$ by assumption. Since $\mathbb{E}(\mathcal{C}, {\rm T}\mathcal{F})=0$, $i$ has the left lifting property with respect to $p$ by Lemma \ref{lem3}. Thus $i$ is a cofibration by Proposition \ref{prop2.22}(1). Thus $\{f\mid f$ is an inflation with ${\rm Cone}(f)\in\mathcal{C}\}\subseteq{\rm CoFib}$, and ${\rm CoFib}=\{f\mid f$ is an inflation with ${\rm Cone}(f)\in\mathcal{C}\}$ by assumption.

 $(1)\Leftrightarrow (3)$  can be proved similarly.

  $(4)\Rightarrow (1)$ is clear; and  $(1)\Rightarrow (4)$ is also clear by (1) and (3).

   $(5)\Rightarrow (2)$ is clear.

   $(2)\Rightarrow (5)$. For any object $A\in\mathcal{B}$, there exists a right $\mathcal{C}$-approximation $f: C\rightarrow A$ such that $f\in{\rm TFib}$ by Lemma \ref{lem4.2}(1). Hence there is an $\mathbb{E}$-triangle $\xymatrix{Y\ar[r]&C\ar[r]^f&A\ar@{-->}[r]&}$ with $Y\in{\rm T}\mathcal{F}$ by assumption. Similarly, one has an $\mathbb{E}$-triangle $\xymatrix{A\ar[r]^i&Y'\ar[r]&C'\ar@{-->}[r]&}$ with $Y'\in{\rm T}\mathcal{F}$ and $C'\in\mathcal{C}$. Hence $(\mathcal{C}, {\rm T}\mathcal{F})$ is a complete cotorsion pair by Fact \ref{fact2.18}.
\end{proof}

\begin{definition}{\rm A model structure on an extriangulated category $\mathcal{B}$ is {\it weakly projective} provided that each object is fibrant and it satisfies the equivalent conditions in Proposition \ref{prop4.4}.}
\end{definition}

\subsection{Proof of Theorem \ref{thm4.6}}

 By Theorem \ref{thmA}, ${\rm Im}\Phi\in\Gamma$ and $\Psi\Phi={\rm Id}_\Omega$. It remains to prove that ${\rm Im}\Psi\in\Omega$ and $\Phi\Psi={\rm Id}_\Gamma$.

Let $({\rm CoFib}, {\rm Fib}, {\rm Weq})$ be a weakly projective model structure on $\mathcal{B}$. It follows from Proposition \ref{prop4.4}(5) that $(\mathcal{C}, {\rm T}\mathcal{F})$ is a complete cotorsion pair. Note that $\mathcal{F}=\mathcal{B}$ by definition. Thus

\begin{center} $\mathcal{C}\cap {\rm T}\mathcal{F}=\mathcal{C}\cap \mathcal{F}\cap\mathcal{W}=\mathcal{C}\cap\mathcal{W}={\rm T}\mathcal{C}$.\end{center}

\noindent So $\mathcal{C}\cap{\rm T}\mathcal{F}={\rm T}\mathcal{C}$ is contravariantly finite in $\mathcal{B}$ by Lemma \ref{lem4.2}(1).

We need to prove that the cotorsion pair $(\mathcal{C}, {\rm T}\mathcal{F})$ is hereditary (and hence  $(\mathcal{C}, {\rm T}\mathcal{F})\in\Gamma$), and that
$({\rm CoFib}, {\rm Fib}, {\rm Weq})=({\rm CoFib}_\omega, {\rm Fib}_\omega, {\rm Weq}_\omega)$, where $\omega=\mathcal{C}\cap{\rm T}\mathcal{F}={\rm T}\mathcal{C}$. We divide the proof into several steps.

Since $({\rm CoFib}, {\rm Fib}, {\rm Weq})$ is a weakly projective model structure, by Proposition \ref{prop4.4}(4) one has

\begin{center}${\rm CoFib}=\{f\mid f$ is an inflation with ${\rm Cone}(f)\in\mathcal{C}\}={\rm CoFib}_\omega$\end{center}
and
\begin{center}${\rm TFib}=\{f\mid f$ is a deflation with ${\rm CoCone}(f)\in{\rm T}\mathcal{F}\}={\rm TFib}_\omega$.\end{center}

{\bf Claim 1:} ${\rm TCoFib}=\{f\mid f$ is a splitting inflation with ${\rm Cone}(f)\in{\rm T}\mathcal{C}\}={\rm TCoFib}_\omega$.

Let $f: A\rightarrow B$ be a splitting inflation with ${\rm Cone}(f)\in{\rm T}\mathcal{C}$. Then there is an isomorphism ${\tiny\begin{pmatrix}f\\i\end{pmatrix}}: A\oplus {\rm Cone}(f)\rightarrow B$. It is clear that the square
\begin{center}$\xymatrix@R=0.5cm{0\ar[r]\ar[d]&A\ar[d]^f\\{\rm Cone}(f)\ar[r]^-{i}&B}$
\end{center}
is a push-out. Since ${\rm Cone}(f)$ is a trivially cofibrant object, $0\rightarrow {\rm Cone}(f)\in{\rm TCoFib}$, and $f\in {\rm TCoFib}$ by Fact \ref{fact2.21}(3).

Conversely, let $f: A\rightarrow B$ be a trivial cofibration. Then $f\in{\rm CoFib}$, and hence $f$ is an inflation with ${\rm Cone}(f)\in\mathcal{C}$ by Proposition \ref{prop4.4}, so there is an $\mathbb{E}$-triangle $\xymatrix{A\ar[r]^f&B\ar[r]^{\pi\ \ \ }&{\rm Cone}(f)\ar@{-->}[r]&}$. Since $A\in\mathcal{B}=\mathcal{F}$, $1_A: A\rightarrow A$ factors through $f$ by Lemma \ref{lem4.1}(2). Therefore, $f$ is a splitting inflation, which implies that $\pi$ is an epimorphism. It is easy to prove that the following square
\begin{center}$\xymatrix@R=0.5cm{A\ar[r]\ar[d]_f&0\ar[d]\\B\ar[r]^-{\pi}&{\rm Cone}(f)}$
\end{center}
is a push-out. Since ${\rm Cone}(f)$ is a trivially cofibration, $0\rightarrow {\rm Cone}(f)\in{\rm TCoFib}$ is also a trivial cofibration by Fact \ref{fact2.21}(3), i.e., ${\rm Cone}(f)\in{\rm T}\mathcal{C}$. This completes the proof of {\bf Claim 1}.

{\bf Claim 2:} ${\rm Weq}={\rm Weq}_\omega$. This follow from ${\rm Weq}={\rm TFib}\circ{\rm TCoFib}={\rm TFib}_\omega\circ{\rm TCoFib}_\omega={\rm Weq}_\omega$.

{\bf Claim 3:} ${\rm Fib}=\{f:A\rightarrow B\mid {\rm Hom}_\mathcal{B}(W, f)$ is epic for any $W\in\omega\}={\rm Fib}_\omega$.

If $g: C\rightarrow D$ is a fibration, then $g$ has the right lifting property with respect to all trivial cofibrations by Proposition \ref{prop2.22}(3). For any $W\in\omega$, $0\rightarrow W$ is a trivial cofibration by {\bf Claim 1}. Consider the commutative square
\begin{center}$\xymatrix@R=0.5cm{0\ar[r]\ar[d]&C\ar[d]^f\\W\ar@{-->}[ur]^h\ar[r]_{g\ \ }&D,}$
\end{center}
there is a morphism $h: W\rightarrow C$ such that $f=gh$ by the lifting axiom. That is, $\mathcal{B}(W, g)$ is epic, and $g\in{\rm Fib}_\omega$.

Conversely, suppose that $g:C\rightarrow D$ is a morphism such that $\mathcal{B}(W, g)$ is epic for any $W\in\omega$, we claim $g\in{\rm Fib}$. It suffices to prove that $g$ has the right lifting property with respect to all trivial cofibrations by Proposition \ref{prop2.22}(3). It follows from {\bf Claim 1} that any trivial cofibration can be written as ${\tiny\begin{pmatrix}1\\0\end{pmatrix}}: A\rightarrow A\oplus W$ with $W\in\omega$. Consider the commutative square
\begin{center}$\xymatrix{A\ar[r]^s\ar[d]_{\tiny\begin{pmatrix}1\\0\end{pmatrix}}&C\ar[d]^g\\A\oplus W\ar@{-->}[ur]^-{(s, p)}\ar[r]_{(f,t)}&D.}$
\end{center}
There is a morphism $p: W\rightarrow C$ such that $t=gp$ because $(W, g)$ is epic for any $W\in\omega$. It is easy to check that the morphism $(s, p): A\oplus W\rightarrow C$ satisfies $(s, p){\tiny\begin{pmatrix}1\\0\end{pmatrix}}=s$ and $g(s,p)=(f, t)$. This completes the proof of {\bf Claim 3}.

Now we have proved that ${\rm CoFib}={\rm CoFib}_\omega, {\rm Fib}={\rm Fib}_\omega$ and ${\rm Weq}={\rm Weq}_\omega$. This implies that $({\rm CoFib}_\omega, {\rm Fib}_\omega, {\rm Weq}_\omega)$ is also a model structure. It follows from Theorem \ref{thmC} that $(\mathcal{C}, {\rm T}\mathcal{F})$ is hereditary. Thus ${\rm Im}\Psi\in\Omega$ and $\Phi\Psi={\rm Id}_\Gamma$. This completes the proof.\hfill$\Box$

\section{Applications}
 Throughout this section we fix an extriangulated category $\mathcal{B}$. We begin this section with the following definitions.

\begin{definition}{\rm  (\cite[Definition 4.1]{AT}) \ Let $\mathcal B$ be an extriangulated category and $\X$ a subcategory of $\mathcal{B}$.
For each $n\geq 0$, we inductively define subcategories
$\mathcal X_n^{\wedge}$ and $\mathcal X_n^{\vee}$ of $\mathcal C$
as $\mathcal X_n^{\wedge}:={\rm Cone}(\mathcal X_{n-1}^{\wedge},\mathcal X)$
and $\mathcal X_n^{\vee}:={\rm CoCone}(\mathcal X,\mathcal X_{n-1}^{\vee})$,
where $\mathcal X_{-1}^{\wedge}:=\{0\}$
and $\mathcal X_{-1}^{\vee}:=\{0\}$. Put
$$\mathcal X^{\wedge}:=\bigcup\limits_{n\geq 0}\mathcal X_n^{\wedge}~~\mbox{and}~~
\mathcal X^{\vee}:=\bigcup\limits_{n\geq 0}\mathcal X_n^{\vee}.$$}
\end{definition}

Let $\mathcal{X}$ be a subcategory of $\mathcal{B}$. Recall from \cite{Z} that $\mathcal{X}$ is called a \emph{thick subcategory} of $\mathcal{B}$ if it is closed under extensions, cones of inflations, cocones of deflations and direct summands. Denote by ${\rm add} \mathcal{X}$ the smallest subcategory of
$\mathcal{B}$ containing $\mathcal{X}$ and closed under finite direct sums and direct summands.

\begin{definition}{\rm (\cite[Definition 5.1]{AT})
Let $\mathcal{B}$ be an extriangulated category.
A subcategory $\mathcal M$ of $\mathcal{B}$ is called a \emph{silting subcategory} if
it satisfies the following conditions.

{\rm (a)} $\mathcal M$ is a presilting subcategory, that is, $\mathbb{E}^k(\mathcal M,\mathcal M)=0$ for all $k\geq 1$.

{\rm (b)}  $\mathcal{B}$ is the smallest thick subcategory containing
$\mathcal M$.

We say that an object $M\in\mathcal{B}$ is \emph{silting} if so is ${\rm add} M$.}
\end{definition}

We give some examples of silting subcategories.

\begin{ex}\label{ex:4.1} This example comes from \cite[Example 2.2]{AI}.

(1) Let $A$ be a ring. Then $A$ regarded as a stalk complex is a tilting object
in $K^b({\rm proj}\emph{-}A)$. More generally, any tilting complex of $A$
is a tilting object in $K^b({\rm proj}\emph{-}A)$.
By definition, any tilting object is a silting object in triangulated categories.

(2)  Let $A$ be a differential graded ring and let $D(A)$ be the derived category of $A$.
If $H^i(A)=0$ for any $i>0$,
then the thick subcategory ${\rm thick}(A)$ of $D(A)$ generated by $A$ has a silting
object $A$.
\end{ex}

\begin{ex}\label{ex:4.2}
Let $\Lambda$ be an artin algebra. We denote by $\mathcal P^{\infty}(\Lambda)$
the category of finitely
generated right $\Lambda$-modules of finite projective dimension. Since $\mathcal P^{\infty}(\Lambda)$
is closed under extensions, it is an extriangulated category. By \cite[Example 5.2]{AT},
we know that ${\rm add}\Lambda$ is a silting
subcategory of $\mathcal P^{\infty}(\Lambda)$.
Moreover, ${\rm add}\Lambda$ is functorially finite.
\end{ex}

\begin{ex}\label{ex:4.3}
\begin{itemize}
\item[(1)] Let $\Lambda$ be a finite dimensional $k$-algebra and let mod$\Lambda$ denote the category of finitely
generated right $\Lambda$-modules. It follows from \cite[Corollary 5.6]{AT} that
the global dimension of $\Lambda$ is finite if and only if tilting $\Lambda$-modules in ${\rm mod}\Lambda$ coincide with silting objects of ${\rm mod}\Lambda$.

\item[(2)] Let $Q$ be the following infinite quiver:
$$\xymatrix@C=0.5cm@R0.5cm{\cdots \ar[r]^{x_{-5}} &-4 \ar[r]^{x_{-4}} &-3 \ar[r]^{x_{-3}} &-2 \ar[r]^{x_{-2}} &-1 \ar[r]^{x_{-1}} &0 \ar[r]^{x_{0}} &1 \ar[r]^{x_1} &2 \ar[r]^{x_2} &3 \ar[r]^{x_3} &4 \ar[r]^{x_4} &\cdots}$$
Let $\Lambda=kQ/[x_ix_{i+1}x_{i+2},x_{4k}x_{4k+1}](i\neq 4k)$. Then the Auslander-Reiten quiver of ${\rm mod}\Lambda$ is the following.
$$\xymatrix@C=0.1cm@R0.2cm{
 &\bigstar \ar[dr] &&&&\bigstar\ar[dr] &&\bigstar \ar[dr] &&\bigstar \ar[dr] &&&&\bigstar \ar[dr] &&\bigstar \ar[dr] &&\bigstar \ar[dr] &&&&\bigstar \ar[dr]\\
\cdots \ar[ur] \ar[dr] &&\bigstar \ar[dr] &&\circ \ar[ur] \ar[dr] &&\circ \ar[ur] \ar[dr] &&\circ \ar[ur] \ar[dr] &&\bigstar  \ar[dr] &&\bigstar \ar[ur] \ar[dr] &&\circ \ar[ur] \ar[dr] &&\circ \ar[ur] \ar[dr] &&\circ  \ar[dr] &&\bigstar \ar[ur] \ar[dr] &&\cdots\\
 &\circ \ar[ur] &&\circ \ar[ur] &&\circ\ar[ur] &&\circ \ar[ur] &&\circ \ar[ur] &&\bigstar \ar[ur] &&\circ \ar[ur] &&\circ\ar[ur] &&\circ \ar[ur]  &&\circ \ar[ur] &&\circ \ar[ur]
}
$$
The indecomposable objects denoted by $\bigstar$ are the indecomposable objects of a silting
 subcategory in ${\rm mod}\Lambda$.
\end{itemize}
\end{ex}

Recall from \cite{AT} that a complete cotorsion pair $(\mathcal{X}, \mathcal{Y})$ in $\mathcal{B}$ is {\it bounded} if $\mathcal{B}=\mathcal{X}^{\wedge}$ and $\mathcal{B}=\mathcal{Y}^{\vee}$. Adachi and Tsukamoto \cite{AT} proved that any silting object in an extriangulated category can induce a bounded hereditary cotorsion pair.

\begin{lem}\label{main}{\rm (\cite[Theorem 5.7]{AT})}
Let $\mathcal{B}$ be an extriangulated category. Then there exists a
bijection between the following sets:

$(1)$ \  The set of complete cotorsion pairs $(\mathcal X, \mathcal Y)$ satisfying that $\mathcal{B}=\mathcal X^{\wedge}=\mathcal Y^{\vee}$, and $\mathbb{E}^{i}(X, Y)=0$ for each $i\geqslant2$ and all $X\in{\mathcal{X}}$ and $Y\in{\mathcal{Y}}$.

$(2)$ \  The set of silting subcategories of $\mathcal{B}$.

In particular, if $\mathcal M$ is a silting subcategory of $\mathcal{B}$,
then $(\mathcal M^{\vee},\mathcal M^{\wedge})$ is a bounded hereditary cotorsion pair
in $\mathcal{B}$ satisfying $\mathcal M=\mathcal M^{\vee}\cap\mathcal M^{\wedge}$.
\end{lem}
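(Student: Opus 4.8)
The plan is to exhibit the two maps of the claimed bijection and verify that they are mutually inverse. In one direction a silting subcategory $\mathcal{M}$ is sent to $(\mathcal{M}^{\vee},\mathcal{M}^{\wedge})$, and in the other a bounded cotorsion pair $(\mathcal{X},\mathcal{Y})$ with $\mathbb{E}^{i}(\mathcal{X},\mathcal{Y})=0$ for $i\geq 2$ is sent to $\mathcal{X}\cap\mathcal{Y}$. The bulk of the work lies in showing (a) that $(\mathcal{M}^{\vee},\mathcal{M}^{\wedge})$ really is a bounded hereditary cotorsion pair with $\mathcal{M}=\mathcal{M}^{\vee}\cap\mathcal{M}^{\wedge}$, and (b) that for a bounded hereditary cotorsion pair one has $\mathcal{X}=(\mathcal{X}\cap\mathcal{Y})^{\vee}$ and $\mathcal{Y}=(\mathcal{X}\cap\mathcal{Y})^{\wedge}$; once both are in place, mutual inverseness is formal.

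First I would record the Ext-orthogonality for a presilting subcategory $\mathcal{M}$. Feeding the defining $\mathbb{E}$-triangles of $\mathcal{M}_{n}^{\vee}$ and $\mathcal{M}_{n}^{\wedge}$ into the long exact sequences of \cite[Corollary 3.12]{NP} gives a dimension-shifting induction on resolution length which reduces $\mathbb{E}^{k}(\mathcal{M}^{\vee},\mathcal{M}^{\wedge})$, for every $k\geq 1$, to $\mathbb{E}^{k}(\mathcal{M},\mathcal{M})=0$. In particular $\mathbb{E}(\mathcal{M}^{\vee},\mathcal{M}^{\wedge})=0$, and the higher vanishing will supply the hereditary condition once we know the pair is a cotorsion pair.

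The technical heart is producing, for each $C\in\mathcal{C}$, $\mathbb{E}$-triangles $Y_{C}\rightarrow X_{C}\rightarrow C$ and $C\rightarrow Y^{C}\rightarrow X^{C}$ with $X_{C},X^{C}\in\mathcal{M}^{\vee}$ and $Y_{C},Y^{C}\in\mathcal{M}^{\wedge}$. I would let $\mathcal{D}$ be the class of objects admitting such a pair of triangles, note $\mathcal{M}\subseteq\mathcal{D}$ via split triangles, and prove $\mathcal{D}$ is closed under extensions, cones of inflations, cocones of deflations and direct summands; by the silting hypothesis that $\mathcal{C}$ is the smallest thick subcategory containing $\mathcal{M}$, this forces $\mathcal{D}=\mathcal{C}$. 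Each closure property is a horseshoe-type construction: starting from triangles for the two outer terms of an $\mathbb{E}$-triangle one glues together a triangle for the middle term using (ET4), (ET4)$^{\rm op}$, the $3\times 3$-lemma \cite[Proposition 3.15]{NP} and Lemmas~\ref{lem1}--\ref{lem2}, splicing with the aid of $\mathbb{E}(\mathcal{M}^{\vee},\mathcal{M}^{\wedge})=0$; closure under summands uses weak idempotent completeness. Iterating these triangles also yields $\mathcal{C}=(\mathcal{M}^{\vee})^{\wedge}=(\mathcal{M}^{\wedge})^{\vee}$, i.e. boundedness, so $(\mathcal{M}^{\vee},\mathcal{M}^{\wedge})$ is a bounded hereditary cotorsion pair. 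For $\mathcal{M}=\mathcal{M}^{\vee}\cap\mathcal{M}^{\wedge}$ the inclusion ``$\subseteq$'' is clear, and for ``$\supseteq$'' one takes $Z$ with a finite $\mathcal{M}$-coresolution and a finite $\mathcal{M}$-resolution and inducts on coresolution length, using $\mathbb{E}^{\geq 1}(\mathcal{M},\mathcal{M})=0$ and weak idempotent completeness to split syzygies off until $Z\in{\rm add}\,\mathcal{M}$.

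For the reverse direction set $\mathcal{M}:=\mathcal{X}\cap\mathcal{Y}$; then $\mathbb{E}^{k}(\mathcal{M},\mathcal{M})=0$ for $k\geq 1$ is immediate from $\mathcal{M}\subseteq\mathcal{X}$, $\mathcal{M}\subseteq\mathcal{Y}$ and the assumed higher Ext-vanishing. The crucial claim is $\mathcal{X}=\mathcal{M}^{\vee}$ and $\mathcal{Y}=\mathcal{M}^{\wedge}$: ``$\supseteq$'' follows since $\mathcal{X}$ is closed under cocones of deflations and extensions and $\mathcal{Y}$ dually (Proposition~\ref{pro0}), while ``$\subseteq$'' uses boundedness — for $X\in\mathcal{X}$ iterate the approximation triangles of $(\mathcal{X},\mathcal{Y})$ to build a resolution of $X$ by objects of $\mathcal{Y}$, finite because $\mathcal{Y}^{\vee}=\mathcal{C}$, and push $X\in\mathcal{X}$ down the resolution via $\mathbb{E}(\mathcal{X},\mathcal{Y})=0$ and heredity to see every syzygy lies in $\mathcal{X}\cap\mathcal{Y}=\omega$, so $X\in\mathcal{M}^{\vee}$. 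Then $\mathcal{C}=\mathcal{X}^{\wedge}=(\mathcal{M}^{\vee})^{\wedge}$ lies in the thick subcategory generated by $\mathcal{M}$, giving silting condition (b), and the two assignments are mutually inverse by the displayed identifications. The main obstacle is precisely this dévissage: the existence of the cotorsion-pair approximation triangles for every object of $\mathcal{C}$, and symmetrically $\mathcal{X}=(\mathcal{X}\cap\mathcal{Y})^{\vee}$, since in the extriangulated setting one has no kernels or cokernels to fall back on and must route every gluing through (ET4) and the $3\times 3$-lemma while keeping careful track of the Ext-vanishing.
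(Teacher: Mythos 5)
This statement is not proved in the paper at all: it is imported verbatim as \cite[Theorem 5.7]{AT}, so there is no in-paper argument to compare yours against. Judged on its own, your sketch is a faithful reconstruction of the Adachi--Tsukamoto strategy: the two assignments $\mathcal M\mapsto(\mathcal M^{\vee},\mathcal M^{\wedge})$ and $(\mathcal X,\mathcal Y)\mapsto\mathcal X\cap\mathcal Y$, dimension shifting to get $\mathbb E^{k}(\mathcal M^{\vee},\mathcal M^{\wedge})=0$, a thick-subcategory/d\'evissage argument to produce the approximation triangles for every object, and the identifications $\mathcal X=(\mathcal X\cap\mathcal Y)^{\vee}$, $\mathcal Y=(\mathcal X\cap\mathcal Y)^{\wedge}$ to close the loop. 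That is essentially how the cited proof is organized (there via showing ${\rm thick}(\mathcal M)=(\mathcal M^{\vee})^{\wedge}=(\mathcal M^{\wedge})^{\vee}$ rather than your class $\mathcal D$, but the two formulations are interchangeable).

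One point you pass over too quickly: closure of your class $\mathcal D$ (equivalently of $\mathcal M^{\wedge}$ and $\mathcal M^{\vee}$) under direct summands is not a routine consequence of weak idempotent completeness. If $Y\rightarrow X\rightarrow C\oplus C'$ is an approximation triangle, composing with the split deflation onto $C$ gives a deflation $X\rightarrow C$ whose cocone is an extension of $C'$ by $Y$, which need not lie in $\mathcal M^{\wedge}$; so the naive restriction of the triangle does not work. The correct statement is an extriangulated analogue of the Auslander--Buchweitz/Bongartz-type lemma that $\mathcal X^{\wedge}$ is closed under direct summands whenever $\mathcal X$ is, and it requires its own (not difficult, but genuinely separate) induction. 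Since invoking ``$\mathcal C$ is the smallest thick subcategory containing $\mathcal M$'' forces you to verify summand-closure of $\mathcal D$, this is a real gap in the sketch as written, though a repairable one along standard lines. Everything else in your outline, including the use of boundedness in the reverse direction, is consistent with the definitions in \cite{AT}.
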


As a direct consequence of Theorem \ref{thmA} and Lemma \ref{main} one has
\begin{cor}\label{cor:4.2} Let $\mathcal B$ be a weakly idempotent complete extriangulated category. Then any silting object $M$ of $\mathcal B$ induces the model structure $({\rm CoFib}_{\mathcal M}, {\rm Fib}_{\mathcal M}, {\rm Weq}_{\mathcal M})$ on $\mathcal{B}$, where $\mathcal M={\rm add}M$ is a silting subcategory of $\mathcal B$.
\end{cor}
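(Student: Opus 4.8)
The plan is to obtain this as a straightforward application of Theorem~\ref{thmA}, once the appropriate cotorsion pair has been identified. Since $M$ is silting, $\mathcal{M}={\rm add}M$ is a silting subcategory of $\mathcal{C}$, so Lemma~\ref{main} produces a bounded hereditary cotorsion pair $(\mathcal{M}^{\vee},\mathcal{M}^{\wedge})$ in $\mathcal{C}$ with $\mathcal{M}=\mathcal{M}^{\vee}\cap\mathcal{M}^{\wedge}$. Setting $\mathcal{X}:=\mathcal{M}^{\vee}$ and $\mathcal{Y}:=\mathcal{M}^{\wedge}$, one has $\omega=\mathcal{X}\cap\mathcal{Y}=\mathcal{M}$, and $\mathcal{X},\mathcal{Y}$ are full subcategories closed under isomorphisms and direct summands with $(\mathcal{X},\mathcal{Y})$ hereditary. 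Hence, by the ``if'' part of Theorem~\ref{thmA}, everything reduces to checking that $\omega=\mathcal{M}$ is contravariantly finite in $\mathcal{C}$; once this is known, Theorem~\ref{thmA} yields that $({\rm CoFib}_{\omega},{\rm Fib}_{\omega},{\rm Weq}_{\omega})$ --- which is precisely $({\rm CoFib}_{\mathcal{M}},{\rm Fib}_{\mathcal{M}},{\rm Weq}_{\mathcal{M}})$ --- is a model structure on $\mathcal{C}$, with cofibrant objects $\mathcal{M}^{\vee}$, all objects fibrant, trivial objects $\mathcal{M}^{\wedge}$, and homotopy category $\mathcal{M}^{\vee}/\mathcal{M}^{\wedge}$.

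The contravariant finiteness of $\mathcal{M}$ is the one substantial point, and I would handle it in two stages. First, for arbitrary $C\in\mathcal{C}$, the cotorsion pair gives an $\mathbb{E}$-triangle $Y_C\to X_C\xrightarrow{\,f\,}C\dashrightarrow$ with $X_C\in\mathcal{M}^{\vee}$ and $Y_C\in\mathcal{M}^{\wedge}$; applying $\mathcal{C}(W,-)$ for $W\in\mathcal{M}\subseteq\mathcal{M}^{\vee}$ and using $\mathbb{E}(\mathcal{M}^{\vee},\mathcal{M}^{\wedge})=0$ shows that every morphism $W\to C$ factors through $f$, so $f$ is a right $\mathcal{M}^{\vee}$-approximation of $C$. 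Composing with a right $\mathcal{M}$-approximation of $X_C$ then gives one of $C$, so it is enough to show that every $X\in\mathcal{M}^{\vee}$ admits a right $\mathcal{M}$-approximation. This I would prove by induction on the smallest $n$ with $X\in\mathcal{M}^{\vee}_n$: for $n=0$ the identity works, and for $n\ge 1$ one starts from the defining $\mathbb{E}$-triangle $X\to M^{0}\xrightarrow{\,v\,}X'\dashrightarrow$ with $M^{0}\in\mathcal{M}$ and $X'\in\mathcal{M}^{\vee}_{n-1}$, takes a right $\mathcal{M}$-approximation $g\colon W'\to X'$ from the inductive hypothesis, forms the weak pullback of $v$ along $g$ (via (ET4) and Lemmas~\ref{lem1} and \ref{lem2}), and extracts a right $\mathcal{M}$-approximation of $X$ from the resulting commutative square, using $\mathbb{E}(\mathcal{X},\mathcal{Y})=0$ to control the connecting extensions. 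To keep the induction running, one strengthens the assertion so that the approximation produced is a deflation whose cocone again lies in $\mathcal{M}^{\vee}$.

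I expect this inductive argument for the contravariant finiteness of $\mathcal{M}$ to be the only genuine obstacle; the rest of the proof is formal, amounting to quoting Lemma~\ref{main} and Theorem~\ref{thmA} and reading off the descriptions of the cofibrant, fibrant and trivial objects, and of ${\rm Ho}(\mathcal{C})$, directly from the statement of Theorem~\ref{thmA}. If the Adachi--Tsukamoto machinery already guarantees that a silting subcategory of the form ${\rm add}M$ is contravariantly (hence functorially) finite, one may simply invoke that and dispense with the inductive construction.
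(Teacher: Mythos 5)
Your reduction --- quote Lemma~\ref{main} to get the bounded hereditary cotorsion pair $(\mathcal{M}^{\vee},\mathcal{M}^{\wedge})$ with $\mathcal{M}^{\vee}\cap\mathcal{M}^{\wedge}=\mathcal{M}$, then feed it into Theorem~\ref{thmA} --- is exactly what the paper does; the paper offers no argument beyond ``direct consequence''. You are also right to isolate the contravariant finiteness of $\omega=\mathcal{M}$ as the one point that neither Lemma~\ref{main} nor the cotorsion-pair axioms supply. The problem is that your inductive argument for it cannot be completed, and indeed the property can fail. The weak step is ``extracts a right $\mathcal{M}$-approximation of $X$ from the resulting commutative square'': maps $W\to X$ with $W\in\mathcal{M}$ are not controlled by the triangle $X\to M^{0}\xrightarrow{v}X'$, because there is no rotation of $\mathbb{E}$-triangles placing $\mathcal{C}(W,X)$ at the end of an exact sequence built from $\mathcal{C}(W,M^{0})$ and $\mathcal{C}(W,X')$, and the weak pullback of $v$ along a right $\mathcal{M}$-approximation of $X'$ is only an extension of an object of $\mathcal{M}$ by $X$, which need not lie in $\mathcal{M}$.

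A concrete obstruction: let $R$ be a ring containing an element $a$ whose right annihilator is not finitely generated as a right ideal (e.g.\ $R=k[x,y_{1},y_{2},\dots]/(xy_{i})$ with $a=x$), and take $\mathcal{C}=K^{b}({\rm proj}\,R)$ with $M=R$, which is a silting (even tilting) object as in Example~\ref{ex:4.1}(1). The two-term complex $P=(R\xrightarrow{\ \cdot a\ }R)$ lies in $\mathcal{M}^{\vee}_{1}$, and $\mathcal{C}(R,P)\cong{\rm ann}(a)$ as a right ${\rm End}(R)=R$-module; a right ${\rm add}R$-approximation $R^{n}\to P$ would force ${\rm ann}(a)$ to be generated by $n$ elements. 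Hence ${\rm add}R$ is not contravariantly finite here, and by the ``only if'' direction of Theorem~\ref{thmA} no model structure of the stated form exists, so Corollary~\ref{cor:4.2} is false as literally stated. The fix is to add contravariant (or functorial) finiteness of ${\rm add}M$ as a hypothesis --- it is automatic when $\mathcal{C}$ is ${\rm Hom}$-finite or Krull--Schmidt, which is evidently the intended setting of Examples~\ref{ex:4.2} and~\ref{ex:4.3} --- rather than hope, as your last sentence does, that the Adachi--Tsukamoto machinery provides it; it does not. This gap is present in the paper as well, so your proposal is in fact more honest than the original about where the difficulty sits, but it does not close it.
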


We recall the following notion of co-$t$-structures, which was independently introduced by Bondarko \cite{Bon} and Pauksztello \cite{Pau} as an analog of $t$-structures defined in \cite{B}.

\begin{definition}  {\rm (\cite{Bon,Pau}) Let $\mathcal{B}$ be a triangulated category with the shift functor [1]. A \emph{co-t-structure} on $\mathcal{B}$ is a pair $(\mathcal{X},\mathcal{Y})$ of subcategories of $\mathcal{B}$ such that
\begin{enumerate}
\item $\mathcal{X}[-1]\subseteq \mathcal{X}$ and $\mathcal{Y}[1]\subseteq \mathcal{Y}$.

\item $\mathrm{Hom}_{\mathcal{A}}(\mathcal{X}[-1],\mathcal{Y})=0$.

\item Any object $T\in{\mathcal{B}}$ has a distinguished triangle
$\xymatrix@C=0.6cm{X\ar[r]&T\ar[r]&Y\ar[r]&X[1]}$
\noindent in $\mathcal{B}$ with $X\in{\mathcal{X}[-1]}$ and $Y\in{\mathcal{Y}}$.
\end{enumerate}}
{\rm In this case, the intersection $\mathcal{X}\cap\mathcal{Y}$ will be called the \emph{coheart} of
the co-t-structure $(\mathcal{X},\mathcal{Y})$.}
\end{definition}

Note that hereditary cotorsion pairs on a triangulated category are exactly co-t-structures (see \cite[Example 3.4]{AT}). In combination this with Theorem \ref{thm4.6}, we have the following result.

\begin{cor}\label{corB} Let $\mathcal{X}$ and $\mathcal{Y}$ full additive subcategories of a triangulated category $\mathcal{B}$ which are closed under direct summands and isomorphisms. Then there exists a one-to-one correspondence between weakly projective model structures on $\mathcal{B}$ and co-t-structures with the coheart contravariantly finite in $\mathcal{B}$.
\end{cor}

\section*{Acknowledgements}
We sincerely thank the anonymous referee for valuable comments and suggestions, which improve the manuscript.

\vspace{0.5cm}
\hspace{-4mm}\textbf{Data Availability}\hspace{2mm} Data sharing not applicable to this article as no datasets were generated or analysed during
the current study.
\vspace{2mm}

\hspace{-4mm}\textbf{Conflict of Interests}\hspace{2mm} The authors declare that they have no conflicts of interest to this work.


\renewcommand\refname{\bf References}

\end{document}